\patchcmd{\@maketitle}{\LARGE \@title}{\LARGE\bfseries\@title}{}{}
\renewcommand{\@seccntformat}[1]{\csname the#1\endcsname.\quad}
\definecolor{darkblue}{rgb}{0,0,.5}
\def\th@plain{%
	\thm@notefont{}
	\itshape 
}
\def\th@definition{%
	\thm@notefont{}
	\normalfont 
}
\renewenvironment{proof}[1][\proofname]{\par
	\normalfont
	\topsep0\p@\@plus3\p@ \trivlist
	\item[\hskip\labelsep\itshape
	#1\@addpunct{.}]\ignorespaces
}{%
	\qed\endtrivlist
}
\newtheorem{theorem}{Theorem}[section]
\newtheorem{lemma}[theorem]{Lemma}
\newtheorem{corollary}[theorem]{Corollary}
\newtheorem{proposition}[theorem]{Proposition}
\theoremstyle{definition}
\newtheorem{definition}[theorem]{Definition}
\theoremstyle{definition}
\newtheorem{example}[theorem]{Example}
\theoremstyle{definition}
\newtheorem{remark}[theorem]{Remark}
\newcommand{\R}{\ensuremath{\mathbb R}}
\newcommand{\ran}{\ensuremath{\operatorname{ran}}}
\newcommand{\zer}{\ensuremath{\operatorname{zer}}}
\newcommand{\dom}{\ensuremath{\operatorname{dom}}}
\newcommand{\gra}{\ensuremath{\operatorname{gra}}}
\newcommand{\Fix}{\ensuremath{\operatorname{Fix}}}
\newcommand{\Id}{\ensuremath{\operatorname{Id}}}
\newcommand{\sgn}{\ensuremath{\operatorname{Sign}}}
\def\beq{\begin{equation}}
\def\eeq{\end{equation}}
\def\baq{\begin{eqnarray}}
\def\eaq{\end{eqnarray}}
\def\baqn{\begin{eqnarray*}}
\def\eaqn{\end{eqnarray*}}
\renewcommand*{\backrefalt}[4]{%
\ifcase #1 %
(Not cited)%
\or
(Cited on p.~#2)%
\else
(Cited on pp.~#2)%
\fi
}
\begin{document}

\title{Solving Non-Monotone Inclusions Using  Monotonicity of Pairs of Operators}

\author{
Ba Khiet Le\thanks{Optimization Research Group, Faculty of Mathematics and Statistics, Ton Duc Thang University, Ho Chi Minh City, Vietnam. E-mail: \texttt{lebakhiet@tdtu.edu.vn}}, 
~
Minh N. Dao\thanks{School of Science, RMIT University, Melbourne, VIC 3000, Australia. E-mail: \texttt{minh.dao@rmit.edu.au}}, 
~and~ 
Michel Th\' era\thanks{Mathematics and Computer Science Department, University of Limoges, 123 Avenue Albert Thomas,
87060 Limoges CEDEX, France. E-mail: \texttt{michel.thera@unilim.fr}}
}

\date{\today}

\maketitle

\begin{abstract}
\noindent In this paper, under the monotonicity of pairs of operators, we propose some Generalized Proximal Point Algorithms to solve  non-monotone inclusions using warped resolvents and transformed resolvents. The weak, strong, and linear convergence of the proposed algorithms are established under very mild conditions.
\end{abstract}

\paragraph{Keywords:}
Non-monotone inclusions,  
monotonicity of pairs of operators, 
nonconvex programming,
warped resolvents, transformed resolvents,
proximal point algorithm.

\paragraph{Mathematics Subject Classification (MSC 2020):}
49J52, 49J53, 28B05.

\section{Introduction}

Our goal is to provide a new efficient method for solving the inclusion
\begin{equation}\label{main}
0 \in F(x),
\end{equation}
where \( F:\mathcal{H} \rightrightarrows \mathcal{H} \) is a set-valued operator, not necessarily monotone, defined in a Hilbert space \( \mathcal{H} \). This inclusion arises frequently in practice, particularly in nonconvex optimization, where the critical point condition involves finding
\begin{equation}\label{main2}
\min_{x \in \mathcal{H}} f(x),
\end{equation}
with \( f: \mathcal{H} \to \mathbb{R} \cup \{\infty\} \) possibly nonconvex. One of the most straightforward and effective methods for solving (\ref{main2}) is the difference-of-convex algorithm (\textbf{DCA}) (see, e.g, \cite{LeT,An,Arag,Le,Pham}), which is based on the forward-backward technique and the decomposition of \( f \) into the difference of two convex functions. Another popular approach is based on the hidden convexity and the proximal point algorithm (\textbf{PPA}) (see, e.g, \cite{Iusem,Pennanen,Rockafellar2019}). Besides, there are other well-known methods solving nonconvex problems such as gradient descent (\textbf{GD}) \cite{Nesterov2,Polyak}, accelerated gradient method \cite{Carmon}, stochastic gradient descent (\textbf{SGD}) \cite{Ge}, and cubic-regularized Newton method \cite{Nesterov1, Nesterov3}...

{Recently, Adly, Cojocaru, and Le \cite{acl} introduced a new simple   method to solve \eqref{main} with a linear convergence rate when \( F \) is a single-valued Lipschitz continuous mapping, under the strong monotonicity of the pair \( (F,v) \), where \( v: \mathcal{H} \to \mathcal{H} \) is a suitably chosen single-valued mapping. The solution set of \eqref{main} is \( F^{-1}(0) \), which involves the inverse of \( F \). Instead of computing \( F^{-1} \), which can be challenging, the method proposes finding a simpler function \( v \) similar to \( F \) (i.e., \( F \) and \( v \) share similar monotonicity properties) and for which \( v^{-1} \) is easier to compute. In this sense, \( v \) can be considered a simplified adaptation of \( F \).

The concept of  monotonicity of pairs is a natural and non-trivial extension of classical monotonicity, allowing for \( F \) to be non-monotone, particularly in cases where \( F \) has a block structure (see e.g., \cite{acl,LT} and Examples 1, 2). An intriguing question is whether this method can be applied when \( F \) is set-valued, relying  on the strong or even on only the monotonicity of the pair \( (F,v) \), potentially eliminating the need to compute \( v^{-1} \).
 
 This motivates us to study a Generalized Proximal Point Algorithm (\textbf{GPPA}) under the monotonicity of the pair \( (F, v) \) as follows:  
\begin{equation}\label{mainal}
(\textbf{GPPA}): \quad x_0 \in \mathcal{H}, \quad x_{n+1} \in J_{\gamma_n F}^v(x_n), \quad n = 0, 1, 2, \dots
\end{equation}  
for some \( \gamma_n > 0 \), where \( J_{\gamma_n F}^v(x) = (\gamma_n F + v)^{-1} \circ v(x) \) is the \emph{warped resolvent} of \( \gamma_n F \) with kernel \( v \), introduced by Bui--Combettes in \cite{bc}. When \( v = \Id \), the identity operator, this reduces to the classical Proximal Point Algorithm (\textbf{PPA}) (see, e.g., \cite{Rockafellar}). The use of a nonlinear mapping \( v \) allows for greater flexibility and simplicity in computation, especially in cases where the identity \( \Id \) does not suffice. The mapping \( v \) can adapt to the complexity of the set-valued operator \( F \).  

The algorithm (\textbf{GPPA}) is based on the fact that \( x^* \) is a solution of \eqref{main} if and only if \( x^* \) is a solution of the fixed-point inclusion:  
\beq
x \in J_{\gamma F}^v(x), \;\gamma>0.
\eeq

In \cite{bc}, the authors employed warped resolvents to solve monotone inclusions. In this paper, our first contribution is to demonstrate that the combination of warped resolvents and the monotonicity of pairs can handle non-monotone inclusions. To enhance the effectiveness of this approach, we modify the conditions in the original definition of the warped resolvent (Definition \ref{wa}).
In \cite{bc}, it is additionally required that \( (F + v) \) be injective to ensure that \( J_F^v \) is single-valued. However, this condition is quite restrictive, as \( (F + v)^{-1} \) operates on the function \( v \) rather than directly on the variable. In our definition, this injectivity condition is unnecessary, and as a result, the warped resolvent \( J_F^v \) is generally set-valued. Nonetheless, we demonstrate that in certain important cases, \( J_F^v \) can  be still single-valued, such as under the (strong) monotonicity of the pair \( (F, v) \) (see Proposition \ref{p:warped}).  

We then show that strong convergence of \eqref{mainal} can be achieved if we find a mapping \( v \) such that \( (F, v) \) is strongly monotone (Theorem \ref{t:warped}). Additionally, if \( v^{-1} \) is well-defined, single-valued, and Lipschitz continuous, then linear convergence is obtained. In cases where only the monotonicity of \( (F, v) \) holds and \( v^{-1} \) is weakly continuous, we have the weak convergence. Note that here, we only require the properties of \( v^{-1} \), not its computation.  

Our second contribution is the introduction of the \emph{transformed resolvent} of \( F \) with respect to \( v \), defined as 
\[
T_{F}^v := v \circ (F + v)^{-1}.
\]  
When \( v = \Id \), the transformed resolvent \( T_{F}^v \) reduces to the classical resolvent of \( F \). While the warped resolvent applies \( v \) after \( (F + v)^{-1} \), the transformed resolvent applies \( v \) before \( (F + v)^{-1} \), leading to some surprising properties. Like the classical resolvent, the transformed resolvent has a key property: if the pair \( (F, v) \) is monotone and \( v^{-1} \) is not necessarily well-defined, then \( T_{F}^v \) is single-valued and firmly nonexpansive (Proposition \ref{p:transformed}).  

To highlight the effectiveness of the transformed resolvents, consider the following simple example. Let \( F = v = A \), where \( A \) is a matrix. In this case, \( (A, A) \) is monotone, and the warped resolvent \( J_F^v = (2A)^{-1}A \) can be set-valued, whereas the transformed resolvent \( T_{F}^v = A(2A)^{-1} \) is single-valued and \( \frac{1}{2} \)-Lipschitz continuous. A similar example can be constructed if \( F = A + f \), where \( A \) is a matrix and \( f \) is a small nonlinear perturbation such that \( (F, A) \) is monotone.  

Using Halpern's technique and the fact that the transformed resolvent is firmly nonexpansive (Proposition \ref{p:transformed}), we propose two algorithms, (\textbf{GPPA1}) and (\textbf{GPPA2}), which achieve weak convergence, strong  convergence and linear convergence (Theorem \ref{t:transformed} and Theorem \ref{strongc}) under more favorable conditions than those required by  (\textbf{GPPA}). Notably, the analysis of (\textbf{GPPA1}) and (\textbf{GPPA2}) only requires the injectivity of \( v \) and does not depend on the continuity of \( v^{-1} \), as in the convergence analysis of (\textbf{GPPA}). Especially even if \( v \) is not injective, we can still obtain the image of a  solution  of the original problem through the transformation $v$ (Theorem \ref{strongc}). Finally, we provide an application to quadratic programming to demonstrate the effectiveness of our approach.  

The structure of the paper is as follows. In Section \ref{sec2}, we review the concept of monotonicity for pairs of operators. In Section \ref{sec3}, we explore the properties of warped and transformed resolvents under the monotonicity of pairs. We then establish, under some mild assumptions, various results on weak convergence, strong convergence, and linear convergence of the proposed algorithms in Section \ref{sec4}. Section \ref{sec5} provides an application to quadratic programming, and we conclude the paper in Section \ref{sec6}.

\section{Notations and background material}\label{sec2}

Throughout this work, \( \mathcal{H} \) is a real Hilbert space with inner product \( \langle \cdot, \cdot \rangle \) and induced norm \( \|\cdot\| \). The set of nonnegative integers is denoted by \( \mathbb{N} \), and the set of real numbers by \( \mathbb{R} \).  

Let \( F : \mathcal{H} \rightrightarrows \mathcal{H} \) be a set-valued mapping. The \emph{domain}, \emph{range}, \emph{graph}, \emph{zeros}, and \emph{fixed points} of \( F \) are defined respectively as:  
\begin{align*}
\dom F &= \{x \in \mathcal{H} : F(x) \neq \varnothing\}, \\
\ran F &= \bigcup_{x \in \mathcal{H}} F(x), \\
\gra F &= \{(x, y) : x \in \mathcal{H}, y \in F(x)\}, \\
\zer F &= \{x \in \mathcal{H} : 0 \in F(x)\}, \\
\Fix F &= \{x \in \mathcal{H} : x \in F(x)\}.
\end{align*}  

The \emph{inverse} of \( F \) is defined by \( F^{-1}(y) = \{x \in \mathcal{H} : y \in F(x)\} \). It is easy to see that \( \dom F = \ran F^{-1} \) and \( \ran F = \dom F^{-1} \).  

The set-valued mapping \( F \) is said to be \emph{monotone} if for all \( (x, x^*), (y, y^*) \in \gra F \),
\[
\langle x^* - y^*, x - y \rangle \geq 0,
\]
and \emph{\(\alpha\)-strongly monotone} if \( \alpha \in (0, +\infty) \) and for all \( (x, x^*), (y, y^*) \in \gra F \),
\[
\langle x^* - y^*, x - y \rangle \geq \alpha \|x - y\|^2.
\]  

When \( F : \mathcal{H} \to \mathcal{H} \) is single-valued on its domain, we define:  
\begin{itemize}
\item
\( F \) is \emph{\(\ell\)-Lipschitz continuous} (\(\ell \in [0, +\infty)\)) if for all \( x, y \in \dom F \),
\[
\|F(x) - F(y)\| \leq \ell \|x - y\|;
\]
\item
\( F \) is \emph{nonexpansive} if it is \( 1 \)-Lipschitz continuous, i.e., for all \( x, y \in \dom F \),
\[
\|F(x) - F(y)\| \leq \|x - y\|;
\]
\item
\( F \) is \emph{firmly nonexpansive} (see, e.g., \cite{Rockafellar}) if for all \( x, y \in \dom F \),
\[
\|F(x) - F(y)\|^2 + \|(\Id - F)(x) - (\Id - F)(y)\|^2 \leq \|x - y\|^2,
\]
or equivalently,
\[
\|F(x) - F(y)\|^2 \leq \langle F(x) - F(y), x - y \rangle.
\]
\end{itemize}  

Next, we recall the definition of \emph{monotonicity of pairs} introduced in \cite{acl} and used in \cite{LT}, which generalizes classical monotonicity. Given set-valued mappings \( F_1, F_2 : \mathcal{H} \rightrightarrows \mathcal{H} \), we define:  
\begin{itemize}
\item
The pair \( (F_1, F_2) \) is \emph{monotone} if for all \( x, y \in \mathcal{H} \),
\beq
\langle F_1(x) - F_1(y), F_2(x) - F_2(y) \rangle \geq 0,
\eeq
i.e., for all \( x, y \in \mathcal{H} \), \( x_1^* \in F_1(x) \), \( y_1^* \in F_1(y) \), \( x_2^* \in F_2(x) \), \( y_2^* \in F_2(y) \),
$$
\langle x_1^* - y_1^*, x_2^* - y_2^* \rangle \geq 0.
$$
\item
The pair \( (F_1, F_2) \) is \emph{\(\alpha\)-strongly monotone} if \( \alpha \in (0, +\infty) \) and for all \( x, y \in \mathcal{H} \),
\beq\label{strong}
\langle F_1(x) - F_1(y), F_2(x) - F_2(y) \rangle \geq \alpha \|x - y\|^2,
\eeq
i.e., for all \( x, y \in \mathcal{H} \), \( x_1^* \in F_1(x) \), \( y_1^* \in F_1(y) \), \( x_2^* \in F_2(x) \), \( y_2^* \in F_2(y) \),
$$
\langle x_1^* - y_1^*, x_2^* - y_2^* \rangle \geq \alpha \|x - y\|^2.
$$
\end{itemize}  

It is clear that if \( F \) is monotone (resp., strongly monotone), then the pair \( (F, \Id) \) is monotone (resp., strongly monotone). The monotonicity of pairs often arises in problems involving block structures, where a natural adaptation appears, as shown in the following.
\begin{example}
Let \( F, v : \mathbb{R}^2 \to \mathbb{R}^2 \) be defined by  
\[
F(x_1, x_2) = \begin{bmatrix}
x_2 + \vert \sin(x_1) \vert \\
x_1 - \cos \vert x_2 \vert
\end{bmatrix}
\quad \text{and} \quad
v(x_1, x_2) = \begin{bmatrix}
x_2 \\
x_1
\end{bmatrix}.
\]
Then \( F \) is not monotone, but \( (F, v) \) is monotone.
\end{example}

\begin{example}
Let \( F, v : \mathbb{R}^2 \rightrightarrows \mathbb{R}^2 \) be defined by  
\[
F(x_1, x_2) = \begin{bmatrix}
\sgn(x_2) + x_1 \\
\sgn(x_1) - x_2
\end{bmatrix}
\quad \text{and} \quad
v(x_1, x_2) = \begin{bmatrix}
x_2 \\
x_1
\end{bmatrix},
\]
where  
\[
\sgn(\alpha) = \begin{cases}
1 & \text{if } \alpha > 0, \\
[-1, 1] & \text{if } \alpha = 0, \\
-1 & \text{if } \alpha < 0.
\end{cases}
\]
Then \( F \) is set-valued and non-monotone, but \( (F, v) \) is monotone.  

Next, we compute \( (F + v)^{-1} \), the warped resolvent \( J_F^v \), and the transformed resolvent \( T_{F}^v \). Given \( Y = (y_1, y_2)^\top \), we want to find \( X = (x_1, x_2)^\top \) such that \( Y \in (F + v)(X) \). This is equivalent to solving  
\[
\begin{bmatrix}
y_1 \\
y_2
\end{bmatrix}
\in
\begin{bmatrix}
\sgn(x_2) + x_1 + x_2 \\
\sgn(x_1) - x_2 + x_1
\end{bmatrix}
=
\begin{bmatrix}
\sgn(x_2) \\
\sgn(x_1)
\end{bmatrix}
+
\begin{bmatrix}
1 & 1 \\
-1 & 1
\end{bmatrix}
\begin{bmatrix}
x_2 \\
x_1
\end{bmatrix}.
\]
Let \( X_1 = (x_2, x_1)^\top \) and  
\[
F_1(X) = \begin{bmatrix}
\sgn(x_1) \\
\sgn(x_2)
\end{bmatrix}, \quad
A = \begin{bmatrix}
1 & 1 \\
-1 & 1
\end{bmatrix}.
\]
Here, \( F_1 \) is monotone and \( A \) is a positive definite matrix. We can easily compute \( X_1 = (F_1 + A)^{-1}(Y) \) and obtain \( X \).
\end{example}

\begin{example}
If \( F : \mathcal{H} \rightrightarrows \mathcal{H} \) can be decomposed as \( F = A + B \) and \( (A, B) \) is monotone, then \( (F, A) \) is monotone. This decomposition is conceptually similar to the decomposition in difference-of-convex (DC) programming. However, under the monotonicity of pairs, using our generalized proximal point algorithms, we can ensure the convergence of the generated sequences \( (x_k) \) to a solution. In contrast, using (\textbf{DCA}), we generally only achieve the convergence of \( (\|x_{k+1} - x_k\|) \) to zero while  \( (x_k) \) may not converge to a solution (see Sections \ref{sec4} and \ref{sec5}).
\end{example}

\begin{example}
Let \( F, G : \mathcal{H} \rightrightarrows \mathcal{H} \) and \( g : \mathcal{H} \to \mathcal{H} \) such that \( F \) is monotone and \( (G, g) \) is (strongly) monotone. Then \( F \circ g + G \) may not be monotone, but \( (F \circ g + G, g) \) is (strongly) monotone.
\end{example}
{Strong convergence and weak convergence of sequences are denoted by $\to$ and $\rightharpoonup$, respectively. A single-valued map $F: \mathcal{H}\to \mathcal{H}$ is said to be \emph{weakly continuous} if it is continuous with respect to the weak topology on $\mathcal{H}$, i.e., for every sequence $(x_n)_{n\in \mathbb{N}}$ in $\mathcal{H}$ with $x_n\rightharpoonup x$, one has $F(x_n)\rightharpoonup F(x)$. We say that a set-valued mapping $F: \mathcal{H} \rightrightarrows \mathcal{H}$ has a \emph{strongly-weakly closed graph} if, for every sequence $(x_n, y_n)_{n \in \mathbb{N}}$ in $\gra F$ with $x_n\rightharpoonup x$ and $y_n\to y$, it holds that $(x, y)\in \gra F$, i.e., $y \in F(x)$.}  

Finally, we recall Opial's lemma \cite{Opial}.  

\begin{lemma}[Opial's Lemma]
\label{l:Opial}
Let \( S \) be a nonempty subset of \( \mathcal{H} \), and let \( (x_n)_{n \in \mathbb{N}} \) be a sequence in \( \mathcal{H} \). Suppose that:  
\begin{enumerate}
\item
For every \( x^* \in S \), \( \lim_{n \to \infty} \|x_n - x^*\| \) exists.  
\item
Every sequential weak cluster point of \( (x_n)_{n \in \mathbb{N}} \) belongs to \( S \).  
\end{enumerate}
Then the sequence \( (x_n)_{n \in \mathbb{N}} \) converges weakly to some point \( x_\infty \in S \).  
\end{lemma}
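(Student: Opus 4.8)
The plan is to combine the two hypotheses to show that $(x_n)_{n\in\mathbb{N}}$ has exactly one weak sequential cluster point, and then upgrade this to weak convergence of the whole sequence. First I would use hypothesis (i): fixing any $x^*\in S$ (possible since $S\neq\varnothing$), the existence of $\lim_{n\to\infty}\|x_n-x^*\|$ forces $(\|x_n-x^*\|)_{n\in\mathbb{N}}$ to be bounded, whence $(x_n)_{n\in\mathbb{N}}$ is bounded in $\mathcal{H}$. Since $\mathcal{H}$ is a Hilbert space, bounded sequences admit weakly convergent subsequences, so $(x_n)_{n\in\mathbb{N}}$ possesses at least one weak cluster point, and by hypothesis (ii) every such point lies in $S$.

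Next I would prove uniqueness of the weak cluster point. Suppose $x_\infty$ and $y_\infty$ are both weak cluster points. By hypothesis (ii) they belong to $S$, so by hypothesis (i) both $\lim_n\|x_n-x_\infty\|$ and $\lim_n\|x_n-y_\infty\|$ exist. Expanding the squared norms and subtracting yields the identity
\[
\|x_n-x_\infty\|^2-\|x_n-y_\infty\|^2 = -2\langle x_n,\, x_\infty-y_\infty\rangle + \|x_\infty\|^2-\|y_\infty\|^2,
\]
whose left-hand side converges; hence $\langle x_n,\, x_\infty-y_\infty\rangle$ converges along the whole sequence. Evaluating this limit along a subsequence weakly convergent to $x_\infty$ gives $\langle x_\infty,\, x_\infty-y_\infty\rangle$, while along a subsequence weakly convergent to $y_\infty$ it gives $\langle y_\infty,\, x_\infty-y_\infty\rangle$; equating the two values yields $\|x_\infty-y_\infty\|^2=0$, i.e.\ $x_\infty=y_\infty$.

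Finally I would argue that a bounded sequence with a unique weak cluster point converges weakly to it. If $x_n\not\rightharpoonup x_\infty$, there would exist some $h\in\mathcal{H}$, some $\varepsilon>0$, and a subsequence $(x_{n_k})$ with $|\langle x_{n_k}-x_\infty,\, h\rangle|\geq\varepsilon$ for all $k$; extracting a further weakly convergent sub-subsequence produces a weak cluster point of $(x_n)_{n\in\mathbb{N}}$ which must equal $x_\infty$ by uniqueness, contradicting the lower bound. Hence $x_n\rightharpoonup x_\infty$, and $x_\infty\in S$ by hypothesis (ii).

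I expect the uniqueness step to be the crux: the only genuinely nonformal move is the algebraic identity combined with the simultaneous evaluation of $\lim_n\langle x_n,\, x_\infty-y_\infty\rangle$ along two different weakly convergent subsequences, which is precisely where both hypotheses are used in tandem. The boundedness and the passage from \emph{unique cluster point} to \emph{weak limit} are routine consequences of the weak sequential compactness of bounded sets in $\mathcal{H}$.
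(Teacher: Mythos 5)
Your proof is correct and complete: boundedness from hypothesis (i), uniqueness of the weak sequential cluster point via the identity
\[
\|x_n-x_\infty\|^2-\|x_n-y_\infty\|^2=-2\langle x_n,\,x_\infty-y_\infty\rangle+\|x_\infty\|^2-\|y_\infty\|^2,
\]
and the subsequence-extraction argument upgrading a unique cluster point of a bounded sequence to weak convergence are exactly the right three steps, and each is carried out without gaps. Note, however, that the paper offers no proof of its own to compare against: the lemma is simply recalled from Opial's original article \cite{Opial}, so what you have reconstructed is the classical textbook argument (the one found in Opial's paper and in standard monographs on monotone operator theory), with the crux being, as you say, the simultaneous evaluation of $\lim_n\langle x_n,\,x_\infty-y_\infty\rangle$ along two weakly convergent subsequences.
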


\section{Nonlinear Resolvents}
\label{sec3}

In this section, we focus on two kinds of nonlinear resolvents. Indeed we introduce the transformed resolvent and recall the concept of warped resolvent \cite{bc} with a small modification, which form the core of our algorithms for solving  non-monotone inclusion problems.

\begin{definition}\label{wa}
Let \( F: \mathcal{H} \rightrightarrows \mathcal{H} \) and \( v: \mathcal{H} \to \mathcal{H} \) with \( \dom v = \mathcal{H} \).  
The \emph{transformed resolvent} of \( F \) with respect to \( v \) is defined as
\[
T_{F}^v = v \circ (F + v)^{-1}.
\]  
The \emph{warped resolvent} of \( F \) with kernel \( v \) is defined as  
\[
J_F^v = (F + v)^{-1} \circ v,
\]  
provided that \( \ran v \subseteq \ran(F + v) \).
\end{definition}

From the definition, it follows that \( \dom T_{F}^v = \ran(F + v) \),  $\ran T_{F}^v\subseteq \ran v$ and if \( \ran v \subseteq \ran(F + v) \), then \( \dom J_F^v = \dom v =\mathcal{H} \). See \cite[Theorem~2.3]{BWY10} for cases where \( \ran(F + v) = \mathcal{H} \), and \cite[Proposition~3.9(i)]{bc} for cases where \( \ran v \subseteq \ran(F + v) \).

\begin{proposition}
\label{p:Fix}
Let \( F: \mathcal{H} \rightrightarrows \mathcal{H} \), \( v: \mathcal{H} \to \mathcal{H} \), and \( \gamma \in (0, +\infty) \). Then the following hold:
\begin{enumerate}
\item\label{p:Fix_transformed}
\( \Fix T_{\gamma F}^v = v(\zer F) \).
\item\label{p:Fix_warped}
\( \Fix J_{\gamma F}^v = \zer F \).
\item\label{p:Fix_two}
\( \Fix T_{\gamma F}^v = v(\Fix J_{\gamma F}^v) \).
\end{enumerate}
\end{proposition}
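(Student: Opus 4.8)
The plan is to unfold the definitions of the fixed-point sets and of the two resolvents, reducing each membership condition to the single inclusion $0 \in F(x)$; the last identity then drops out by combining the first two. The one structural fact I will use repeatedly is that $v$ is single-valued, which is exactly what lets me cancel the term $v(x)$ on both sides of a set-valued inclusion. I do not expect a genuine obstacle: the argument is pure definition chasing, and the only point needing slight care is the set-valued arithmetic in $\gamma F(x) + v(x)$ together with the single-valuedness of $v$.

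First I would establish assertion (ii). By definition, $x \in \Fix J_{\gamma F}^v$ means $x \in J_{\gamma F}^v(x) = (\gamma F + v)^{-1}(v(x))$, which unfolds to $v(x) \in (\gamma F + v)(x) = \gamma F(x) + v(x)$. Since $v(x)$ is a single point, this says there is some $w \in F(x)$ with $v(x) = \gamma w + v(x)$, i.e.\ $\gamma w = 0$; as $\gamma > 0$ this forces $w = 0$, hence $0 \in F(x)$. Every step is reversible, so $\Fix J_{\gamma F}^v = \zer F$.

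Next I would prove assertion (i). By definition $y \in \Fix T_{\gamma F}^v$ means $y \in T_{\gamma F}^v(y) = v\bigl((\gamma F + v)^{-1}(y)\bigr)$, that is, there exists $x$ with $y \in (\gamma F + v)(x)$ and $y = v(x)$. Substituting $y = v(x)$ into the inclusion gives $v(x) \in \gamma F(x) + v(x)$, which by the same cancellation as above is equivalent to $0 \in F(x)$, i.e.\ $x \in \zer F$. Thus $y \in \Fix T_{\gamma F}^v$ if and only if $y = v(x)$ for some $x \in \zer F$, which is precisely $y \in v(\zer F)$; hence $\Fix T_{\gamma F}^v = v(\zer F)$.

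Finally, assertion (iii) is immediate from the two identities just obtained: combining them yields $\Fix T_{\gamma F}^v = v(\zer F) = v(\Fix J_{\gamma F}^v)$, where the second equality simply rewrites $\zer F$ using (ii). This completes the plan; the only place where one must be deliberate is in reading $v(x) \in \gamma F(x) + v(x)$ correctly as asserting the existence of a single $w \in F(x)$ that is then forced to be $0$.
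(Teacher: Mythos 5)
Your proof is correct and takes essentially the same definition-unfolding approach as the paper: part (i) is the identical chain of equivalences (the paper writes $x \in \Fix T_{\gamma F}^v \iff \exists z,\ x \in \gamma F(z) + v(z) \text{ and } x = v(z) \iff x \in v(\zer F)$), and part (iii) is likewise obtained by combining (i) and (ii). The only difference is that for part (ii) the paper simply cites Proposition 3.10(i) of B\`ui--Combettes, whereas you give the direct cancellation argument ($v(x) = \gamma w + v(x)$ forces $w = 0$), which makes your version self-contained; the argument is sound, including the careful reading of the set-valued sum $\gamma F(x) + v(x)$.
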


\begin{proof}
\ref{p:Fix_transformed}: We have that
\begin{align*}
x \in \Fix T_{\gamma F}^v &\iff x \in T_{\gamma F}^v(x) \\
&\iff \exists z \in (\gamma F + v)^{-1}(x), \quad x = v(z) \\
&\iff \exists z \in \mathcal{H}, \quad x \in \gamma F(z) + v(z) \text{~and~} x = v(z) \\
&\iff \exists z \in \zer F, \quad x = v(z) \\
&\iff x \in v(\zer F).
\end{align*}

\ref{p:Fix_warped}: This follows from \cite[Proposition~3.10(i)]{bc}.  

\ref{p:Fix_two}: This is a direct consequence of \ref{p:Fix_transformed} and \ref{p:Fix_warped}.
\end{proof}

\begin{remark}
From Proposition \ref{p:Fix}, to find a zero point of \( F \), we can find a fixed point of either the transformed resolvent \( T_{\gamma F}^v \) or the warped resolvent \( J_{\gamma F}^v \). One of our contributions is to show that, under the monotonicity of \( (F, v) \), the transformed resolvent \( T_{\gamma F}^v \) is single-valued and firmly nonexpansive. Thus, we can compute the fixed points of \( T_{\gamma F}^v \) and \( J_{\gamma F}^v \) since they are related; see Proposition~\ref{p:Fix}\ref{p:Fix_two}.
\end{remark}

\begin{proposition}
\label{p:transformed}
Let \( F: \mathcal{H} \rightrightarrows \mathcal{H} \), \( v: \mathcal{H} \to \mathcal{H} \), and \( \gamma \in (0, +\infty) \). Then the following hold:
\begin{enumerate}
\item\label{p:transformed_firm}
If \( (F, v) \) is monotone, then \( T_{\gamma F}^v \) is single-valued and firmly nonexpansive.
\item\label{p:transformed_cont}
If \( (F, v) \) is \( \alpha \)-strongly monotone and \( v \) is \( L \)-Lipschitz continuous, then \( T_{\gamma F}^v \) is Lipschitz continuous with constant \( \frac{1}{1 + \alpha\gamma L^{-2}} < 1 \), and hence \( T_{\gamma F}^v \) is a contraction.
\end{enumerate}
\end{proposition}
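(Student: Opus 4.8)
The plan is to work directly from the definition \( T_{\gamma F}^v = v \circ (\gamma F + v)^{-1} \) and reduce both claims to the defining inequality for pair-monotonicity. First I would fix \( u, u' \in \dom T_{\gamma F}^v = \ran(\gamma F + v) \) and pick arbitrary images \( p \in T_{\gamma F}^v(u) \) and \( p' \in T_{\gamma F}^v(u') \). Unpacking the composition, there exist \( x, x' \in \mathcal{H} \) with \( u \in \gamma F(x) + v(x) \) and \( p = v(x) \), and likewise \( u' \in \gamma F(x') + v(x') \) and \( p' = v(x') \). Equivalently, \( \tfrac{1}{\gamma}(u - v(x)) \in F(x) \) and \( \tfrac{1}{\gamma}(u' - v(x')) \in F(x') \), with \( p = v(x) \) and \( p' = v(x') \).

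For part \ref{p:transformed_firm}, I would apply monotonicity of the pair \( (F, v) \) at \( x \) and \( x' \): since \( v \) is single-valued with \( v(x) = p \) and \( v(x') = p' \), monotonicity yields, after multiplying by \( \gamma > 0 \), the inequality \( \langle (u - v(x)) - (u' - v(x')), v(x) - v(x') \rangle \ge 0 \). Substituting \( p - p' = v(x) - v(x') \) and expanding, this rearranges exactly to \( \|p - p'\|^2 \le \langle p - p', u - u' \rangle \). Setting \( u = u' \) then forces \( \|p - p'\|^2 \le 0 \), hence \( p = p' \), so \( T_{\gamma F}^v \) is single-valued; for general \( u, u' \) the same inequality is precisely the firm-nonexpansiveness estimate recalled in Section \ref{sec2}.

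For part \ref{p:transformed_cont}, I would repeat the identical computation using the \( \alpha \)-strong monotonicity bound, which gives \( \langle p - p', u - u' \rangle - \|p - p'\|^2 \ge \alpha\gamma\|x - x'\|^2 \). The crucial step is to convert the right-hand side, expressed in the domain variable \( x \), into one expressed in \( p = v(x) \): since \( v \) is \( L \)-Lipschitz, \( \|p - p'\| = \|v(x) - v(x')\| \le L\|x - x'\| \), so \( \|x - x'\|^2 \ge L^{-2}\|p - p'\|^2 \). Substituting yields \( \langle p - p', u - u' \rangle \ge (1 + \alpha\gamma L^{-2})\|p - p'\|^2 \), and Cauchy--Schwarz on the left then gives \( \|p - p'\| \le (1 + \alpha\gamma L^{-2})^{-1}\|u - u'\| \). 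Since strong monotonicity forbids \( v \) from being constant, one has \( L > 0 \), whence the constant is strictly below \( 1 \) and \( T_{\gamma F}^v \) is a contraction.

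The only genuinely delicate point is the careful unpacking of the set-valued composition \( v \circ (\gamma F + v)^{-1} \) in the first step, since the pair-monotonicity hypothesis constrains \( F(x) \) paired against \( v(x) \), and one must verify that the two selections produced by the resolvent are exactly the ones to which monotonicity applies. Once this bookkeeping is done, both assertions follow by elementary algebra, with the sole additional idea in part \ref{p:transformed_cont} being the Lipschitz-based transfer of the strong-monotonicity estimate from the \( x \)-variable to the \( v(x) \)-variable.
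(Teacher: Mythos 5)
Your proof is correct and follows essentially the same route as the paper's: both unpack the composition \( T_{\gamma F}^v = v \circ (\gamma F + v)^{-1} \), apply the pair-monotonicity inequality to the two resolvent selections to obtain \( \|p - p'\|^2 \le \langle p - p', u - u' \rangle \), and in the strongly monotone case transfer the estimate from \( \|x - x'\|^2 \) to \( L^{-2}\|p - p'\|^2 \) via the Lipschitz bound before applying Cauchy--Schwarz. Your explicit handling of single-valuedness (setting \( u = u' \)) and of the degenerate case \( L = 0 \) merely spells out details the paper leaves implicit.
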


\begin{proof}
Let \( x_1, x_2 \in \dom T_{\gamma F}^v = \ran(\gamma F + v) \). For each \( i \in \{1, 2\} \), let \( y_i \in T_{\gamma F}^v(x_i) \). Then there exists \( z_i \in (\gamma F + v)^{-1}(x_i) \) such that \( y_i = v(z_i) \). Thus, for \( i \in \{1, 2\} \),
\begin{equation}\label{eq:Fz}
x_i - v(z_i) \in \gamma F(z_i).
\end{equation}

\ref{p:transformed_firm}: By the monotonicity of \( (F, v) \),
\[
\langle (x_1 - x_2) - (v(z_1) - v(z_2)), v(z_1) - v(z_2) \rangle \geq 0,
\]
which yields
\begin{equation}\label{trainq}
\|y_1 - y_2\|^2 \leq \langle x_1 - x_2, y_1 - y_2 \rangle.
\end{equation}

It follows that \( T_{\gamma F}^v \) is single-valued and firmly nonexpansive.

\ref{p:transformed_cont}: From \eqref{eq:Fz}, using the strong monotonicity of \( (F, v) \) and the Lipschitz continuity of \( v \),
\[
\langle (x_1 - x_2) - (v(z_1) - v(z_2)), v(z_1) - v(z_2) \rangle \geq \alpha\gamma \|z_1 - z_2\|^2 \geq \alpha\gamma L^{-2} \|v(z_1) - v(z_2)\|^2.
\]
Thus,
\[
(1 + \alpha\gamma L^{-2})\|y_1 - y_2\|^2 \leq \langle x_1 - x_2, y_1 - y_2 \rangle \leq \|x_1 - x_2\| \|y_1 - y_2\|,
\]
which implies that
\[
\|y_1 - y_2\| \leq \frac{1}{1 + \alpha\gamma L^{-2}} \|x_1 - x_2\|,
\]
and the conclusion follows.
\end{proof}

\begin{remark}
From \eqref{strong}, it follows that if \( (F, v) \) is strongly monotone, then \( v \) is injective. 
\end{remark}
\begin{proposition}
\label{p:warped}
Let \( F: \mathcal{H} \rightrightarrows \mathcal{H} \), \( v: \mathcal{H} \to \mathcal{H} \), and \( \gamma \in (0, +\infty) \). Suppose that \( \ran v \subseteq \ran(\gamma F + v) \).
\begin{enumerate}

\item\label{p:warped_mono} If \( (F, v) \) is monotone and \( v \) is injective, then \( \dom J_{\gamma F}^v = \mathcal{H} \) and \( J_{\gamma F}^v \) is single-valued. 
\item\label{p:warped_strong}
If \( (F, v) \) is \( \alpha \)-strongly monotone  {then, for all $(x_1, y_1), (x_2, y_2)\in \gra J_{\gamma F}^v$, we have
\begin{align}\label{eq:y1y2}
\alpha \gamma \|y_1 - y_2\|^2 + \|v(y_1) - v(y_2)\|^2 \leq \langle v(x_1) - v(x_2), v(y_1) - v(y_2) \rangle.
\end{align}
 Furthermore if $v$ is  $L$-Lipschitz continuous then
\begin{align}\label{eq:v}
\|v(y_1) -v(y_2)\|\leq \frac{1}{1 +\alpha\gamma L^{-2}}\|v(x_1) -v(x_2)\|.
\end{align}   
If, in addition, $v^{-1}$ is well-defined $\ell$-Lipschitz continuous, then $J_{\gamma F}^v$ is $\frac{\ell L}{1 +\alpha\gamma L^{-2}}$-Lipschitz continuous.}
\end{enumerate}
\end{proposition}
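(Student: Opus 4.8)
The plan is to reduce every assertion to the defining inclusion for the warped resolvent and then feed it into the (strong) monotonicity of the pair $(F,v)$. The single structural fact I would record at the outset is the characterization: since $J_{\gamma F}^v = (\gamma F + v)^{-1}\circ v$, one has $(x,y)\in\gra J_{\gamma F}^v$ if and only if $v(x)\in(\gamma F + v)(y)$, equivalently $\gamma^{-1}\big(v(x)-v(y)\big)\in F(y)$. Everything else is algebra built on this.

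For part \ref{p:warped_mono}, the domain claim $\dom J_{\gamma F}^v = \mathcal{H}$ is immediate from the standing hypothesis $\ran v\subseteq\ran(\gamma F + v)$ together with the remark following Definition~\ref{wa}. For single-valuedness I would take $y_1, y_2\in J_{\gamma F}^v(x)$, so that $\gamma^{-1}\big(v(x)-v(y_i)\big)\in F(y_i)$ for $i\in\{1,2\}$, and apply the monotonicity of $(F,v)$ at the points $y_1, y_2$, pairing these $F$-images with the $v$-images $v(y_1), v(y_2)$. The common term $v(x)$ cancels, leaving $-\|v(y_1)-v(y_2)\|^2\geq 0$, hence $v(y_1)=v(y_2)$; injectivity of $v$ then forces $y_1=y_2$.

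For part \ref{p:warped_strong}, I would take $(x_i,y_i)\in\gra J_{\gamma F}^v$, so $\gamma^{-1}\big(v(x_i)-v(y_i)\big)\in F(y_i)$, and invoke $\alpha$-strong monotonicity of $(F,v)$ at $y_1, y_2$. After multiplying through by $\gamma$ and expanding the inner product, the term $-\|v(y_1)-v(y_2)\|^2$ moves to the left-hand side, yielding exactly \eqref{eq:y1y2}. To obtain \eqref{eq:v} I would use $L$-Lipschitz continuity of $v$ in the form $\|y_1-y_2\|^2\geq L^{-2}\|v(y_1)-v(y_2)\|^2$ to bound the first term of \eqref{eq:y1y2} from below, apply Cauchy--Schwarz to the right-hand side, and cancel one factor of $\|v(y_1)-v(y_2)\|$. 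Finally, if $v^{-1}$ is $\ell$-Lipschitz, I would chain $\|y_1-y_2\|\leq\ell\|v(y_1)-v(y_2)\|$, the bound \eqref{eq:v}, and $\|v(x_1)-v(x_2)\|\leq L\|x_1-x_2\|$ to read off the constant $\frac{\ell L}{1+\alpha\gamma L^{-2}}$.

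I do not expect a genuine obstacle here; the proof is a direct manipulation. The only points requiring a little care are the cancellation step in \eqref{eq:v}, where the degenerate case $v(y_1)=v(y_2)$ must be disposed of separately before dividing, and the implicit single-valuedness of $J_{\gamma F}^v$ in part \ref{p:warped_strong}: this is not assumed but follows either from the final Lipschitz estimate itself (by taking $x_1=x_2$) or from the fact, recorded in the preceding remark, that strong monotonicity of $(F,v)$ entails injectivity of $v$, so that part \ref{p:warped_mono} already applies.
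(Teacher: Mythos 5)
Your proposal is correct and follows essentially the same route as the paper: both reduce membership in $\gra J_{\gamma F}^v$ to the inclusion $v(x)-v(y)\in\gamma F(y)$, feed it into the (strong) monotonicity of $(F,v)$, and then apply the Lipschitz bounds $\|y_1-y_2\|^2\geq L^{-2}\|v(y_1)-v(y_2)\|^2$, Cauchy--Schwarz, and the $\ell$-Lipschitz continuity of $v^{-1}$ exactly as the paper does. Your added remarks on the degenerate cancellation case and on why single-valuedness in part (ii) is not an issue are minor refinements the paper leaves implicit, not a different argument.
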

\begin{proof}
\ref{p:warped_mono}:  Since \( \ran v \subseteq \ran(\gamma F + v) \), it follows that \( \dom J_{\gamma F}^v = \mathcal{H} \). {For each $i\in \{1, 2\}$, let \( x_i \in \mathcal{H} \)} and \( y_i \in J_{\gamma F}^v(x_i) = (\gamma F + v)^{-1} \circ v(x_i) \). Then \( v(x_i) \in \gamma F(y_i) + v(y_i) \), which yields
\[
v(x_i) - v(y_i) \in \gamma F(y_i).
\]

 Since  the monotonicity of \( (F, v) \), we have
\[
\langle v(x_1) - v(y_1) - v(x_2) + v(y_2), v(y_1) - v(y_2) \rangle \geq 0,
\]
or equivalently,
\begin{align}\label{eq:y1y2}
 \|v(y_1) - v(y_2)\|^2 \leq \langle v(x_1) - v(x_2), v(y_1) - v(y_2) \rangle.
\end{align}
{In view of \eqref{eq:y1y2}, $x_1 = x_2$} implies \( y_1 = y_2 \). Thus, \( J_{\gamma F}^v \) is single-valued.

\ref{p:warped_strong}: Now, assume that $(F, v)$ is $\alpha$-strongly monotone and $v$ is $L$-Lipschitz continuous. Similar to  \eqref{eq:y1y2}, one has
\begin{align*}
\alpha \gamma\| y_1-y_2 \|^2+ \|v(y_1) - v(y_2)\|^2 \leq \langle v(x_1) - v(x_2), v(y_1) - v(y_2) \rangle.
\end{align*}
If $v$ is  $L$-Lipschitz continuous,  using Cauchy--Schwarz inequality, we have 
\begin{align*}
(1 +\alpha\gamma L^{-2})\|v(y_1) - v(y_2)\|^2 \leq \|v(x_1) - v(x_2)\|\|v(y_1) - v(y_2)\|,
\end{align*}
which implies \eqref{eq:v}.

If $v^{-1}$ is well-defined and $\ell$-Lipschitz continuous, then 
\begin{align*}
\|y_1 - y_2\| =\|v^{-1}(v(y_1)) - v^{-1}(v(y_1))\|\leq \ell\|v(y_1) - v(y_2)\|,
\end{align*}
which together with \eqref{eq:v} and the Lipschitz continuity of $v$ completes the proof.
\end{proof}
\begin{remark}
Under the monotonicity of $(F,v)$, the  transformed resolvent \( T_{\gamma F}^v \) is  firmly nonexpansive while the warped resolvent \( J_{\gamma F}^v \) requires many additional assumptions to be Lipschitz continuous with big Lipschitz constant in general. 
\end{remark}

\section{Main Results}\label{sec4}

In this section, we analyze the convergence rate of some generalized proximal point algorithms (GPPAs) using warped resolvents and transformed resolvents under the monotonicity of pairs of operators to solve the non-monotone inclusion \eqref{main}. First, we propose the following assumptions.

\noindent \textbf{Assumption 1}: There exists an injective single-valued mapping \( v: \mathcal{H} \to \mathcal{H} \) such that \( (F, v) \) is monotone  and \( \ran v \subset \ran (\gamma_n F + v) \) where $(\gamma_n)_{n\in \mathbb{N}}$ is a sequence of positive real numbers.

\noindent \textbf{Assumption 1'}: There exists a single-valued mapping \( v: \mathcal{H} \to \mathcal{H} \) such that  \( (F, v) \) is \( \alpha \)-strongly monotone and \( \ran v \subset \ran (\gamma_n F + v) \) where $(\gamma_n)_{n\in \mathbb{N}}$ is a sequence of positive real numbers.

\begin{remark}\label{inj}
It is known that even if \( F \) is strongly monotone, the inclusion \( 0 \in F(x) \) may have no solution. For example, take \( F: \mathbb{R} \to \mathbb{R} \) defined by:
\[
F(x) = x + 0.5 +
\begin{cases}
1 & \text{if } x \geq 0, \\
-1 & \text{if } x < 0.
\end{cases}
\]
Then \( F \) and the pair \( (F, \Id) \) are 1-strongly monotone, but the inclusion \( 0 \in F(x) \) has no solution.
\end{remark}

\begin{theorem}
\label{t:warped}
 Suppose that Assumption 1 holds. Assume that $\zer F\neq \varnothing$ and, for all $n\in \mathbb{N}$, $\ran v \subseteq \ran(\gamma_n F + v)$. Consider the algorithm  
\begin{align}
(\textbf{GPPA}): \quad x_{n+1} = J_{\gamma_n F}^v(x_n), x_0\in \mathcal{H}, n\ge 0.
\end{align}
\begin{enumerate}
\item\label{t:warped_weakcvg}
If $F$ has a strongly-weakly closed graph, $v^{-1}$ is  well-defined weakly continuous, and $\sum_{n=0}^{+\infty} \gamma_n^2 =+\infty$ then the sequence $(x_n)_{n\in \mathbb{N}}$ converges weakly to a solution of \eqref{main}.
\item\label{t:warped_strongcvg} 
If  Assumption 1' holds and $\gamma:=\inf_{n\in \mathbb{N}} \gamma_n > 0$ then the sequence $(x_n)_{n\in \mathbb{N}}$ converges strongly to the unique solution $x^*$ of \eqref{main}. Moreover,
\begin{enumerate}
\item\label{t:warped_strongcvg_v}
If $v$ is $L$-Lipschitz continuous, then $(v(x_n))_{n\in \mathbb{N}}$ converges to $v(x^*)$ with linear rate $\frac{1}{1 + \alpha \gamma L^{-2}}$.
\item\label{t:warped_strongcvg_x}
If $v$ is $L$-Lipschitz continuous and $v^{-1}$ is  well-defined $\ell$-Lipschitz continuous, then $(x_n)_{n\in \mathbb{N}}$ converges to $x^*$ with linear rate.
\end{enumerate} 
\end{enumerate}
\end{theorem}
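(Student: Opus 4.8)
The plan is to carry out the whole analysis in the ``image space,'' tracking the sequence $u_n := v(x_n)$ rather than $(x_n)_{n\in\N}$ itself, since Proposition \ref{p:warped} controls $v$-differences directly. Under Assumption 1 the resolvent $J_{\gamma_n F}^v$ is single-valued with full domain (Proposition \ref{p:warped}\ref{p:warped_mono}), so the iteration is well defined. First I would record the one-step inequality: inserting $(x_1,y_1)=(x_n,x_{n+1})$ and $(x_2,y_2)=(x^*,x^*)$ with $x^*\in\zer F$ (a fixed point of $J_{\gamma_n F}^v$ by Proposition \ref{p:Fix}\ref{p:Fix_warped}) into the monotone estimate of Proposition \ref{p:warped}\ref{p:warped_mono} gives
\[
\|u_{n+1}-v(x^*)\|^2\le\langle u_n-v(x^*),\,u_{n+1}-v(x^*)\rangle .
\]
The polarization identity then yields $\|u_{n+1}-v(x^*)\|^2\le\|u_n-v(x^*)\|^2-\|u_{n+1}-u_n\|^2$. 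Hence for every $x^*\in\zer F$ the limit $\lim_n\|v(x_n)-v(x^*)\|$ exists (so $(u_n)$ is Fejér monotone with respect to $v(\zer F)$, in particular bounded) and $\sum_n\|v(x_{n+1})-v(x_n)\|^2<\infty$.

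For part \ref{t:warped_weakcvg} I introduce the residual $r_n:=\gamma_n^{-1}(v(x_n)-v(x_{n+1}))$, which by the construction of $J_{\gamma_n F}^v$ satisfies $r_n\in F(x_{n+1})$. Then $\sum_n\gamma_n^2\|r_n\|^2=\sum_n\|v(x_{n+1})-v(x_n)\|^2<\infty$, and since $\sum_n\gamma_n^2=+\infty$ this forces $\liminf_n\|r_n\|=0$. Upgrading this $\liminf$ to a genuine limit is the decisive step, and the one I expect to be the main obstacle. Here I would use the monotonicity of the pair a second time, applied to the graph points $r_n\in F(x_{n+1})$ and $r_{n+1}\in F(x_{n+2})$: since $v(x_{n+1})-v(x_{n+2})=\gamma_{n+1}r_{n+1}$, monotonicity of $(F,v)$ gives $\langle r_n-r_{n+1},r_{n+1}\rangle\ge0$, whence $\|r_{n+1}\|\le\|r_n\|$ by Cauchy--Schwarz. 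Thus $(\|r_n\|)$ is nonincreasing, and $\liminf_n\|r_n\|=0$ improves to $\|r_n\|\to0$.

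With $\|r_n\|\to0$ available, I would verify the two hypotheses of Opial's Lemma \ref{l:Opial} for $(v(x_n))$ with $S=v(\zer F)$. Hypothesis~(i) is the existence of $\lim_n\|v(x_n)-v(x^*)\|$ established above. For hypothesis~(ii), let $u$ be any weak cluster point, say $v(x_{n_k})\rightharpoonup u$; weak continuity of $v^{-1}$ gives $x_{n_k}=v^{-1}(v(x_{n_k}))\rightharpoonup v^{-1}(u)=:\bar x$, while $r_{n_k-1}\in F(x_{n_k})$ with $r_{n_k-1}\to0$, so the strongly-weakly closed graph of $F$ yields $0\in F(\bar x)$, i.e. $\bar x\in\zer F$ and $u=v(\bar x)\in v(\zer F)$. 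Opial's Lemma then gives $v(x_n)\rightharpoonup v(x_\infty)$ for some $x_\infty\in\zer F$, and a final use of weak continuity of $v^{-1}$ transfers this to $x_n\rightharpoonup x_\infty$, proving \ref{t:warped_weakcvg}. The only remaining delicacy is topological: applying $v^{-1}$ to a weak cluster point $u$ presupposes $u\in\ran v=\dom v^{-1}$, which I would read into the hypothesis that $v^{-1}$ is well defined and weakly continuous (equivalently, $\ran v$ weakly sequentially closed).

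For part \ref{t:warped_strongcvg}, under Assumption 1' I repeat the one-step computation using the $\alpha$-strongly monotone estimate of Proposition \ref{p:warped}\ref{p:warped_strong}: with $d_n:=\|v(x_n)-v(x^*)\|$ it reads $\alpha\gamma_n\|x_{n+1}-x^*\|^2+d_{n+1}^2\le d_nd_{n+1}$, where uniqueness of $x^*$ follows from strong monotonicity of $(F,v)$ applied to $0\in F(x_1^*)$, $0\in F(x_2^*)$. Dropping the first term gives $d_{n+1}\le d_n$, so $(d_n)$ converges; keeping it and summing yields $\alpha\sum_n\gamma_n\|x_{n+1}-x^*\|^2\le d_0\sum_n(d_n-d_{n+1})\le d_0^2<\infty$, and since $\gamma=\inf_n\gamma_n>0$ this gives $\sum_n\|x_{n+1}-x^*\|^2<\infty$, hence $x_n\to x^*$ strongly. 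For the rates, inequality \eqref{eq:v} applied at step $n$ gives $d_{n+1}\le(1+\alpha\gamma_n L^{-2})^{-1}d_n\le(1+\alpha\gamma L^{-2})^{-1}d_n$ (using $\gamma_n\ge\gamma$ and monotonicity of the factor in $\gamma_n$), which is the claimed linear rate for $(v(x_n))$ in \ref{t:warped_strongcvg_v}; composing with the $\ell$-Lipschitz map $v^{-1}$ gives $\|x_n-x^*\|\le\ell\,d_n$ and the linear rate for $(x_n)$ in \ref{t:warped_strongcvg_x}.
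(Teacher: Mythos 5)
Your proposal is correct and follows essentially the same route as the paper: Fej\'er monotonicity of $(v(x_n))_{n\in\mathbb{N}}$ via the estimate \eqref{eq:y1y2}, the residual $r_n\in F(x_{n+1})$ with a second application of pair-monotonicity to upgrade $\liminf_n\|r_n\|=0$ to $\|r_n\|\to 0$, Opial's Lemma \ref{l:Opial} in the image space with $S=v(\zer F)$, and the same one-step strongly monotone estimate from Proposition \ref{p:warped}\ref{p:warped_strong} for part \ref{t:warped_strongcvg} and the rates \ref{t:warped_strongcvg_v}--\ref{t:warped_strongcvg_x}. The only (harmless) deviations are that you sum the strong-monotonicity inequality to obtain $\sum_n\|x_{n+1}-x^*\|^2<\infty$ where the paper lets $d_{n+1}(d_n-d_{n+1})\to 0$ directly, and that you explicitly flag the need for weak cluster points to lie in $\ran v=\dom v^{-1}$, a point the paper leaves implicit.
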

\begin{proof}

\ref{t:warped_weakcvg}: Let $n\in \mathbb{N}$ and $w^* \in v(\zer F)$. Then $w^* = v(x^*)$ for some $x^*\in \zer F$. By Proposition~\ref{p:Fix}\ref{p:Fix_warped}, $x^*\in \Fix J_{\gamma_n F}^v$, i.e., $(x^*, x^*)\in \gra J_{\gamma_n F}^v$. Since $(x_{n+1}, x_n)\in \gra J_{\gamma_n F}^v$,  similarly to \eqref{eq:y1y2}, we have
\baqn \nonumber
 \|v(x_{n+1}) - v(x^*)\|^2 &\leq& \langle v(x_n) - v(x^*), v(x_{n+1}) - v(x^*) \rangle\\
&=&\frac{1}{2}(\|v(x_n) -v(x^*)\|^2+ \|v(x_{n+1}) - v(x^*)\|^2-\|v(x_n) -v(x_{n+1})\|^2) 
\eaqn
which implies that
\begin{align}\label{eq:vv*}
\|v(x_{n+1}) -v(x^*)\|^2\leq \|v(x_n) -v(x^*)\|^2 -\|v(x_n) -v(x_{n+1})\|^2.
\end{align}
Thus the sequence $(\|v(x_n) - v(x^*)\|)_{n\in \mathbb{N}}$ is nonincreasing and hence convergent.

For each $n\in \mathbb{N}$, set $u_n =\frac{v(x_n) - v(x_{n+1})}{\gamma_n}$. Then $u_n\in F(x_{n+1})$ since $x_{n+1} = J_{\gamma_n F}^v(x_n)$. By telescoping \eqref{eq:vv*},
\begin{align*}
\sum_{n=0}^{+\infty} \gamma_n^2\|u_n\|^2 = \sum_{n=0}^{+\infty} \|v(x_n) -v(x_{n+1})\|^2 \leq \|v(x_0) -v(x^*)\|^2 -\lim_{n\to +\infty} \|v(x_n) -v(x^*)\|^2 < +\infty.
\end{align*}
Since $\sum_{n=0}^{+\infty} \gamma_n^2 =+\infty$, it follows that
\begin{align}\label{eq:liminf u}
\liminf_{n\to +\infty} \|u_n\| =0.
\end{align}

On the other hand, since $u_n\in F(x_{n+1})$ and $u_{n+1}\in F(x_{n+2})$, the monotonicity of $(F, v)$ implies that
\begin{align*}
0 \leq \langle u_n - u_{n+1}, v(x_{n+1}) - v(x_{n+2}) \rangle =\langle u_n - u_{n+1}, \gamma_{n+1}u_{n+1} \rangle \leq \gamma_{n+1}(\|u_n\|\|u_{n+1}\| - \|u_{n+1}\|^2),  
\end{align*}
and so $\|u_{n+1}\|^2 \leq \|u_n\|\|u_{n+1}\|$. Thus, $(\|u_n\|)_{n\in \mathbb{N}}$ is convergent. By combining with \eqref{eq:liminf u}, $\|u_n\|\to 0$ as $n\to +\infty$.

Let $\bar{w}$ be a weak cluster point of $(v(x_n))_{n\in \mathbb{N}}$. Then there exists a subsequence $(v(x_{k_n}))_{n\in \mathbb{N}}$ that converges weakly to $\bar{w}$. Since $v^{-1}$ is well-defined and weakly continuous, $x_{k_n}\rightharpoonup \bar{x} = v^{-1}(\bar{w})$ as $n\to +\infty$. Since $u_{k_n-1} \in F(x_{k_n})$ and $F$ has a strongly-weakly closed graph, we deduce that $0\in F(\bar{x})$, and $\bar{w} =v(\bar{x})\in v(\zer F)$. Using Lemma~\ref{l:Opial}, $(v(x_n))_{n\in \mathbb{N}}$ converges weakly to a point $\tilde{w}\in v(\zer F)$. Consequently, $(x_n)$ converges weakly to $\tilde{x} =v^{-1}(\tilde{w})\in \zer F$ because $v^{-1}$ is weakly continuous. 

\ref{t:warped_strongcvg}: Since $(F, v)$ is strongly monotone, from \eqref{strong} we deduce that \eqref{main} has a unique solution $x^*$. {According to Proposition~\ref{p:Fix}\ref{p:Fix_warped}, $x^*\in \Fix J_{\gamma_n F}^v$, and so $(x^*, x^*)\in \gra J_{\gamma_n F}^v$. As $(x_{n+1}, x_n)\in \gra J_{\gamma_n F}^v$, we have from Proposition~\ref{p:warped}\ref{p:warped_strong} that
\begin{align*}
\alpha \gamma \|x_{n+1} - x^*\|^2 + \|v(x_{n+1}) - v(x^*)\|^2 \leq \|v(x_n) - v(x^*)\| \|v(x_{n+1}) - v(x^*)\|,
\end{align*}
where $\gamma = \inf_{n\in \mathbb{N}} \gamma_n$.} 
Thus, $\|v(x_{n+1}) - v(x^*)\| \leq \|v(x_n) - v(x^*)\|$, and the sequence $(\|v(x_n) - v(x^*)\|)_{n\in \mathbb{N}}$ is convergent. Additionally,
\[
\alpha \gamma \|x_{n+1} - x^*\|^2 \leq \|v(x_{n+1}) - v(x^*)\| (\|v(x_n) - v(x^*)\| - \|v(x_{n+1}) - v(x^*)\|) \to 0.
\]
Passing to the limit establishes the strong convergence of $(x_n)_{n\in \mathbb{N}}$ to $x^*$.

\ref{t:warped_strongcvg_v}: If $v$ is $L$-Lipschitz continuous, then by Proposition~\ref{p:warped}\ref{p:warped_strong},
\begin{align*}
\|v(x_{n+1}) - v(x^*)\| \leq \frac{1}{1 + \alpha \gamma_n L^{-2}} \|v(x_n) - v(x^*)\| \leq \kappa \|v(x_n) - v(x^*)\|,
\end{align*}
where $\kappa := \frac{1}{1 + \alpha \gamma L^{-2}} < 1$. Therefore, $(v(x_n))_{n\in \mathbb{N}}$ converges to $v(x^*)$ with a linear rate.

\ref{t:warped_strongcvg_x}: If $v^{-1}$ is well-defined, single-valued, and $\ell$-Lipschitz continuous, then
\begin{align*}
\|x_n - x^*\| \leq \ell \|v(x_n) - v(x^*)\| \leq \ell \kappa^n \|v(x_0) - v(x^*)\| \leq \ell L \kappa^n \|x_0 - x^*\|,
\end{align*}
and the conclusion follows.
\end{proof}

\begin{remark}
\begin{enumerate}
\item
The convergence of \eqref{mainal} requires only the properties of \( v^{-1} \), not its explicit computation.
\item
If \( v^{-1} \) is well-defined (i.e., $v$ is  bijective) and \( (F, v) \) is monotone, we can rewrite the original inclusion \( 0 \in F(x) \) as \( 0 \in G(y) \), where \( G = F \circ v^{-1} \) is monotone (see, e.g., \cite{acl}) and \( y = v(x) \). However, the computation of the resolvent of \( \gamma F \circ v^{-1} \) is usually more complicated than \( (\gamma F + v)^{-1} \) or \( v^{-1} \). Moreover, within the framework of monotonicity of pairs, it is easier to encounter some interesting applications, as shown in Section \ref{sec5}.
\end{enumerate}
\end{remark}

\begin{corollary}\label{weakc}
Suppose that \( \mathcal{H} = \mathbb{R}^n \), $\zer F\neq \varnothing$, $\gamma_n=\gamma>0$, the pair \( (F, v) \) is monotone, \( F \) has a  closed graph, and \( v^{-1} \) is a well-defined continuous function. Then the sequence \( (x_k) \) generated by the algorithm (\textbf{GPPA}) converges to some solution \( x^* \).
\end{corollary}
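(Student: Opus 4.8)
The plan is to obtain this statement as a direct specialization of Theorem~\ref{t:warped}\ref{t:warped_weakcvg} to the finite-dimensional setting $\mathcal{H}=\mathbb{R}^n$, where the gap between the weak and the strong topology disappears. First I would record the three elementary identifications that hold on $\mathbb{R}^n$: weak convergence coincides with strong convergence; a single-valued map is weakly continuous if and only if it is continuous; and a set-valued map has a strongly-weakly closed graph if and only if it has a closed graph. These turn the more technical hypotheses appearing in Theorem~\ref{t:warped}\ref{t:warped_weakcvg} into exactly the hypotheses assumed in the corollary.

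Next I would check that all assumptions of Theorem~\ref{t:warped}\ref{t:warped_weakcvg} are in force. Since $v^{-1}$ is well-defined as a single-valued function, $v$ is injective; combined with the monotonicity of $(F,v)$ and the standing inclusion $\ran v \subseteq \ran(\gamma F + v)$ (which is precisely what makes $J_{\gamma F}^v$, and hence the (\textbf{GPPA}) iteration, well-defined), this yields Assumption~1 with the constant sequence $\gamma_n \equiv \gamma$. The condition $\zer F \neq \varnothing$ is assumed, and by the identifications above $F$ has a strongly-weakly closed graph and $v^{-1}$ is weakly continuous. Finally, the step-size condition $\sum_{n=0}^{+\infty}\gamma_n^2 = +\infty$ holds automatically because $\gamma_n = \gamma > 0$ is constant, so that $\sum_{n=0}^{+\infty}\gamma^2 = +\infty$.

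Invoking Theorem~\ref{t:warped}\ref{t:warped_weakcvg} then gives that $(x_k)_{k\in\mathbb{N}}$ converges weakly to some $x^* \in \zer F$; since weak and strong convergence coincide on $\mathbb{R}^n$, the sequence in fact converges to the solution $x^*$, as claimed. There is essentially no analytic obstacle beyond what is already established in the theorem: the only points requiring care are the bookkeeping of the finite-dimensional topological equivalences and, in particular, noticing that the constant step size automatically verifies the one quantitative hypothesis $\sum_{k}\gamma_k^2 = +\infty$ of the weak-convergence theorem, which one must not overlook.
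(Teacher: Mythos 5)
Your proposal is correct and is essentially the derivation the paper intends: the corollary is stated immediately after Theorem~\ref{t:warped} with no separate proof, precisely because it is the specialization of Theorem~\ref{t:warped}\ref{t:warped_weakcvg} to $\mathbb{R}^n$, where weak and strong convergence, continuity and weak continuity, and closed and strongly-weakly closed graphs all coincide, and the constant step size trivially gives $\sum_n \gamma_n^2 = +\infty$. Your observations that injectivity of $v$ follows from $v^{-1}$ being well-defined and that $\ran v \subseteq \ran(\gamma F + v)$ is implicit in the well-definedness of the (\textbf{GPPA}) iterates are exactly the right bookkeeping.
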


\begin{theorem}
\label{t:transformed}
Suppose that \( (F, v) \) is monotone and $(\gamma_n)_{n\in \mathbb{N}}$ a sequence of positive real numbers. Assume that $\zer F\neq \varnothing$ and, for all $n\in \mathbb{N}$, \( \ran v \subset \ran (\gamma_n F + v) \). Consider the algorithm
\begin{align*}
(\textbf{GPPA1}): \quad x_{n+1} = T_{\gamma_n F}^v(x_n), x_0\in \ran v, n\ge 0.
\end{align*}
Then the following hold:
\begin{enumerate}
\item\label{t:transformed_weakcvg}
If $F$ has a strongly-weakly closed graph, $v^{-1}$ is well-defined and weakly continuous, and $\sum_{n=0}^{+\infty} \gamma_n^2 =+\infty$, then the sequence $(x_n)_{n\in \mathbb{N}}$ converges weakly to a point $\tilde{x}$ such that $v^{-1}(\tilde{x})$ is a solution of \eqref{main}.
\item\label{t:transformed_linearcvg}
If Assumption 1' holds, $v$ is $L$-Lipschitz continuous, and $\gamma=\inf_{n\in \mathbb{N}} \gamma_n > 0$, then the sequence $(x_n)_{n\in \mathbb{N}}$ converges with linear rate $\frac{1}{1 + \alpha\gamma L^{-2}}$ to a point $\tilde{x}$ such that $v^{-1}(\tilde{x})$ is a solution of \eqref{main}.
\end{enumerate} 
\end{theorem}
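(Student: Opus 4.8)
The plan is to recognize both parts as fixed-point iterations driven by the transformed resolvents $T_{\gamma_n F}^v$, whose common fixed-point set is $v(\zer F)$ by Proposition~\ref{p:Fix}\ref{p:Fix_transformed}, and to mirror the argument of Theorem~\ref{t:warped} with the roles of $x_n$ and $v(x_n)$ essentially interchanged. First I would check that the iteration is well posed: since $x_0\in\ran v$ and, writing $z_n:=(\gamma_n F+v)^{-1}(x_n)$, we have $x_{n+1}=v(z_n)\in\ran v\subseteq\ran(\gamma_n F+v)=\dom T_{\gamma_n F}^v$, every iterate stays in $\ran v$, so $v^{-1}(x_n)$ is meaningful and $z_n=v^{-1}(x_{n+1})$. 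Fixing $x^*\in\zer F$ (nonempty by hypothesis) and setting $w^*:=v(x^*)$ gives a point with $w^*\in\Fix T_{\gamma_n F}^v$ for all $n$. Finally, the defining inclusion $x_n-x_{n+1}\in\gamma_n F(z_n)$ lets me introduce $u_n:=(x_n-x_{n+1})/\gamma_n\in F(z_n)$, the analogue of the residual used in Theorem~\ref{t:warped}.

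For part~\ref{t:transformed_weakcvg}, I would apply firm nonexpansiveness of $T_{\gamma_n F}^v$ (Proposition~\ref{p:transformed}\ref{p:transformed_firm}) at $x_n$ and the fixed point $w^*$ to get $\|x_{n+1}-w^*\|^2+\|x_n-x_{n+1}\|^2\le\|x_n-w^*\|^2$. This makes $(\|x_n-w^*\|)_{n\in\mathbb N}$ nonincreasing, hence convergent, verifying the first hypothesis of Opial's Lemma~\ref{l:Opial} with $S=v(\zer F)$; and by telescoping it yields $\sum_n\gamma_n^2\|u_n\|^2=\sum_n\|x_n-x_{n+1}\|^2<+\infty$. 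Together with $\sum_n\gamma_n^2=+\infty$ this forces $\liminf_n\|u_n\|=0$. To upgrade this to $\|u_n\|\to0$, I would use monotonicity of $(F,v)$ at $z_n,z_{n+1}$: since $v(z_n)-v(z_{n+1})=x_{n+1}-x_{n+2}=\gamma_{n+1}u_{n+1}$, this gives $0\le\langle u_n-u_{n+1},u_{n+1}\rangle$, whence $\|u_{n+1}\|\le\|u_n\|$, so $(\|u_n\|)$ converges and must converge to $0$. For a weak cluster point $x_{k_n}\rightharpoonup\bar x$, weak continuity of $v^{-1}$ gives $z_{k_n-1}=v^{-1}(x_{k_n})\rightharpoonup v^{-1}(\bar x)$, and since $u_{k_n-1}\in F(z_{k_n-1})$ with $u_{k_n-1}\to0$, the strongly-weakly closed graph of $F$ yields $0\in F(v^{-1}(\bar x))$, i.e.\ $\bar x\in v(\zer F)$. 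Opial's lemma then delivers weak convergence to some $\tilde x\in v(\zer F)$ with $v^{-1}(\tilde x)\in\zer F$.

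Part~\ref{t:transformed_linearcvg} is much shorter. Strong monotonicity of $(F,v)$ forces $\zer F$ to be a singleton, so $w^*=v(x^*)$ is the unique fixed point, and Proposition~\ref{p:transformed}\ref{p:transformed_cont} makes each $T_{\gamma_n F}^v$ a contraction with constant $\frac{1}{1+\alpha\gamma_n L^{-2}}\le\kappa:=\frac{1}{1+\alpha\gamma L^{-2}}<1$. Evaluating at $x_n$ and $w^*$ gives $\|x_{n+1}-w^*\|\le\kappa\|x_n-w^*\|$, hence $\|x_n-w^*\|\le\kappa^n\|x_0-w^*\|$, which is the claimed linear convergence to $\tilde x=w^*$ with $v^{-1}(\tilde x)=x^*$.

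The step I expect to be the main obstacle is the cluster-point analysis in part~\ref{t:transformed_weakcvg}: passing from $\liminf\|u_n\|=0$ to $\|u_n\|\to0$ through the monotonicity-induced monotonicity of $(\|u_n\|)$, and then correctly combining the weak continuity of $v^{-1}$ with the strongly-weakly closed graph of $F$ to take the limit in $u_{k_n-1}\in F(z_{k_n-1})$. Everything else is a direct transcription of the firmly-nonexpansive (respectively contractive) fixed-point machinery already set up for the warped resolvent.
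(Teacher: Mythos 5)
Your proposal is correct and follows essentially the same route as the paper's proof: firm nonexpansiveness of $T_{\gamma_n F}^v$ at the iterate and the fixed point $w^*=v(x^*)$ gives Fejér monotonicity and the summability $\sum_n\gamma_n^2\|u_n\|^2<+\infty$, the pair-monotonicity argument upgrades $\liminf\|u_n\|=0$ to $\|u_n\|\to 0$, the cluster-point step combines weak continuity of $v^{-1}$ with the strongly-weakly closed graph before invoking Opial's lemma, and part (ii) is the contraction estimate from Proposition~\ref{p:transformed}\ref{p:transformed_cont} together with uniqueness of the zero under strong monotonicity. The only differences are cosmetic (your $z_n$ is the paper's $z_{n+1}$, and you spell out part (ii), which the paper leaves terse).
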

\begin{proof}
\ref{t:transformed_weakcvg}: The algorithm (\textbf{GPPA1}) is well-defined since $x_0\in \ran v$ and  \( \ran v \subset \ran (\gamma_n F + v) \). Let $n\in \mathbb{N}$ and $x^* \in v(\zer F)$. By Proposition~\ref{p:Fix}\ref{p:Fix_transformed}, $x^*\in \Fix T_{\gamma_n F}^v$. Since $x_{n+1} = T_{\gamma_n F}^v(x_n)$, similarly to \eqref{trainq}, one has
\begin{align}\label{eq:xx*}
\|x_{n+1} - x^*\|^2\leq \|x_n - x^*\|^2 - \|x_n - x_{n+1}\|^2,
\end{align}
and so $(\|x_n - x^*\|)_{n\in \mathbb{N}}$ is nonincreasing and convergent.

For each $n\in \mathbb{N}$, there exists $z_{n+1}\in (\gamma_n F +v)^{-1}(x_n)$ such that $x_{n+1} =v(z_{n+1})$. Then $x_n - x_{n+1} =x_n - v(z_{n+1})\in \gamma_n F(z_{n+1})$. Set $u_n =\frac{x_n - x_{n+1}}{\gamma_n}\in F(z_{n+1})$. Telescoping \eqref{eq:xx*} yields
\begin{align*}
\sum_{n=0}^{+\infty} \gamma_n^2\|u_n\|^2 = \sum_{n=0}^{+\infty} \|x_n - x_{n+1}\|^2 \leq \|x_0 -x^*\|^2 -\lim_{n\to +\infty} \|x_n - x^*\|^2 < +\infty,
\end{align*}
which together with $\sum_{n=0}^{+\infty} \gamma_n^2 =+\infty$ implies that
\begin{align}\label{eq:liminf}
\liminf_{n\to +\infty} \|u_n\| =0.
\end{align}

As $u_n\in F(z_{n+1})$ and $u_{n+1}\in F(z_{n+2})$, using the monotonicity of $(F, v)$ and then Cauchy--Schwarz inequality, we have that
\begin{align*}
0 &\leq \langle u_n - u_{n+1}, v(z_{n+1}) - v(z_{n+2}) \rangle \\
&= \langle u_n - u_{n+1}, x_{n+1} - x_{n+2} \rangle \\
&= \langle u_n - u_{n+1}, \gamma_{n+1}u_{n+1} \rangle \\
&\leq \gamma_{n+1}(\|u_n\|\|u_{n+1}\| - \|u_{n+1}\|^2).  
\end{align*}
Therefore, $\|u_{n+1}\|^2 \leq \|u_n\|\|u_{n+1}\|$, and so $(\|u_n\|)_{n\in \mathbb{N}}$ is convergent. In view of \eqref{eq:liminf}, $\|u_n\|\to 0$ as $n\to +\infty$.

Let $\bar{x}$ be a weak cluster point of $(x_n)_{n\in \mathbb{N}}$. Then there exists a subsequence $(x_{k_n})_{n\in \mathbb{N}}$ such that $x_{k_n}\rightharpoonup \bar{x}$ as $n\to +\infty$. Since $v^{-1}$ is well-defined and weakly continuous, $z_{k_n} =v^{-1}(x_{k_n})\rightharpoonup \bar{z} = v^{-1}(\bar{x})$ as $n\to +\infty$. Noting that $u_{k_n-1} \in F(z_{k_n})$, $u_{k_n-1}\to 0$ as $n\to +\infty$, and $F$ has a strongly-weakly closed graph, we obtain $0\in F(\bar{z})$, and hence $\bar{x} =v(\bar{z})\in v(\zer F)$. Now, by Lemma~\ref{l:Opial}, $(x_n)_{n\in \mathbb{N}}$ converges weakly to a point $\tilde{x}\in v(\zer F)$, which complete the proof.

\ref{t:transformed_linearcvg}: This follows from Proposition~\ref{p:transformed}\ref{p:transformed_cont} and the fact that \eqref{main} has a unique solution.
\end{proof}

%
%

\begin{theorem}\label{strongc}
Suppose that $\zer F\neq \varnothing$, the pair \( (F, v) \) is monotone, and \( \ran (\gamma F + v) = \mathcal{H} \) for some \( \gamma > 0 \). Then the sequence \( (x_k) \) generated by the algorithm:
\[
(\textbf{GPPA2}): \quad x_0 \in \mathcal{H}, \; x \in \mathcal{H}, \; x_{k+1} = \alpha_k x + (1 - \alpha_k) T_{\gamma F}^v(x_k), \; k = 1, 2, 3, \dots
\]
converges strongly to some  point $\tilde{x}$ such that $v^{-1}(\tilde{x})$ contains a solution of \eqref{main}, provided that:
\[
\lim_{n \to \infty} \alpha_n = 0, \quad \sum_{n=0}^\infty \alpha_n = \infty, \quad \sum_{n=0}^\infty \vert \alpha_{n+1} - \alpha_n \vert < \infty.
\]
\end{theorem}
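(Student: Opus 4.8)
The plan is to recognize \textbf{(GPPA2)} as a Halpern iteration driven by the single operator $T := T_{\gamma F}^v$ and to invoke the classical strong-convergence theory for such iterations. First I would verify the structural hypotheses needed to make $T$ a well-behaved self-map. Since $(F,v)$ is monotone, Proposition~\ref{p:transformed}\ref{p:transformed_firm} guarantees that $T$ is single-valued and firmly nonexpansive, hence nonexpansive. Because $\ran(\gamma F + v) = \mathcal{H}$, we have $\dom T = \ran(\gamma F + v) = \mathcal{H}$, so $T\colon \mathcal{H} \to \mathcal{H}$ is a genuine self-map. By Proposition~\ref{p:Fix}\ref{p:Fix_transformed}, $\Fix T = v(\zer F)$, which is nonempty because $\zer F \neq \varnothing$; being the fixed-point set of a nonexpansive operator on a Hilbert space, it is closed and convex, so the metric projection $P_{\Fix T}$ is well defined.

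Then I would run the standard Halpern argument in three stages. For \emph{boundedness}, a one-line induction using $Tp = p$ for any fixed $p \in \Fix T$ and the nonexpansiveness of $T$ gives $\|x_{k+1} - p\| \le \max\{\|x - p\|, \|x_0 - p\|\}$, so $(x_k)$ and $(Tx_k)$ are bounded. For \emph{asymptotic regularity}, subtracting consecutive iterates yields
\[
\|x_{k+1} - x_k\| \le (1 - \alpha_k)\|x_k - x_{k-1}\| + |\alpha_k - \alpha_{k-1}|\,\|x - T x_{k-1}\|,
\]
and, invoking $\alpha_k \to 0$, $\sum_k \alpha_k = \infty$, $\sum_k |\alpha_{k+1} - \alpha_k| < \infty$ together with a standard lemma on nonnegative real sequences of the form $s_{k+1} \le (1-\alpha_k)s_k + \beta_k$ with $\sum_k\beta_k < \infty$ (Xu's lemma), one obtains $\|x_{k+1} - x_k\| \to 0$; combined with the identity $x_{k+1} - (1-\alpha_k)Tx_k = \alpha_k x$ and $\alpha_k \to 0$, this gives $\|x_k - Tx_k\| \to 0$.

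For the \emph{identification of the limit}, set $\tilde{x} := P_{\Fix T}(x)$. Using the demiclosedness of $\Id - T$ at $0$ (automatic for firmly nonexpansive $T$), every weak cluster point of $(x_k)$ lies in $\Fix T$, so the variational characterization $\langle x - \tilde{x}, w - \tilde{x}\rangle \le 0$ for all $w \in \Fix T$ yields $\limsup_k \langle x - \tilde{x}, x_{k+1} - \tilde{x}\rangle \le 0$. Feeding this into
\[
\|x_{k+1} - \tilde{x}\|^2 \le (1-\alpha_k)\|x_k - \tilde{x}\|^2 + 2\alpha_k \langle x - \tilde{x}, x_{k+1} - \tilde{x}\rangle
\]
and applying the same sequence lemma gives $x_k \to \tilde{x}$ strongly. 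Alternatively one may simply cite the Halpern--Wittmann theorem, since $T$ is a nonexpansive self-map of a Hilbert space with $\Fix T \neq \varnothing$ and the parameters satisfy its hypotheses. Finally, since $\tilde{x} \in \Fix T = v(\zer F)$, there exists $z \in \zer F$ with $v(z) = \tilde{x}$, i.e.\ $z \in v^{-1}(\tilde{x})$; hence $v^{-1}(\tilde{x})$ contains a solution of \eqref{main}, as claimed.

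I expect the asymptotic-regularity step to be the main obstacle: this is exactly where the summability condition $\sum_k |\alpha_{k+1} - \alpha_k| < \infty$ is indispensable, and it relies on the auxiliary lemma controlling the recursion above. A secondary subtlety is that $v$ need not be injective here, so $v^{-1}(\tilde{x})$ may be multivalued; the statement only asserts that it \emph{contains} a solution, which is precisely what the membership $\tilde{x} \in v(\zer F)$ delivers without requiring invertibility of $v$.
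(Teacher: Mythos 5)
Your proposal is correct and follows essentially the same route as the paper: the paper's proof simply observes that $T_{\gamma F}^v$ is firmly nonexpansive (Proposition~\ref{p:transformed}) with $\Fix T_{\gamma F}^v = v(\zer F) \neq \varnothing$, invokes Halpern's procedure (Halpern/Wittmann/Nakajo--Takahashi) for the strong convergence to a fixed point $\tilde{x}$, and concludes exactly as you do. Your write-up merely unpacks the black-box Halpern--Wittmann argument (boundedness, asymptotic regularity via the summability of $|\alpha_{k+1}-\alpha_k|$, demiclosedness, and the projection characterization of the limit), which the paper leaves to the cited references.
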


\begin{proof}
Using Halpern's procedure (see, e.g., \cite{Halpern, nt, Wittmann}) and noting that the transformed resolvent \( T_{\gamma F}^v = v \circ (\gamma F + v)^{-1} \) is firmly nonexpansive (Proposition \ref{p:transformed}), we conclude that the sequence \( (x_k) \) converges strongly to a fixed point $\tilde{x}$  of \( T_{\gamma F}^v \) and the conclusion follows.
\end{proof}

\begin{remark}
The monotonicity of the pair \( (F, v) \) is a sufficient condition to ensure that the transformed resolvent \( T_{\gamma F}^v \) is nonexpansive. However, there are other cases where both \( T_{\gamma F}^v \) and \( J_{\gamma F}^v \) are nonexpansive. For example, if \( v \) is \( L \)-Lipschitz, \( (\gamma F + v)^{-1} \) is single-valued and \( \tilde{L} \)-Lipschitz, and \( \tilde{L}L \leq 1 \), the following example holds:
\[
F(x_1, x_2) = \begin{bmatrix}
\sgn(x_2) + x_2 - x_1 \\
-\sgn(x_1) - x_1 - x_2
\end{bmatrix}, \quad v(x_1, x_2) = \begin{bmatrix}
x_2 + x_1 \\
-x_1 + x_2
\end{bmatrix}, \quad \gamma = 1.
\]
In this case,
\[
T_{\gamma F}^v(y_1, y_2) = v \circ \begin{bmatrix}
(\sgn + 2\Id)^{-1}(-y_2) \\
(\sgn + 2\Id)^{-1}(y_1)
\end{bmatrix}
\]
is \( \frac{\sqrt{2}}{2} \)-Lipschitz continuous. Thus, \( T_{\gamma F}^v \) is a contraction, and so is \( J_{\gamma F}^v \). In general, if \( J_{\gamma F}^v = (\gamma F + v)^{-1} \circ v \) is set-valued, it may be preferable to use \( T_{\gamma F}^v \), as discussed in the introduction.
\end{remark}

%

\begin{remark}
Our result is novel, even when \( F = f \) is a single-valued mapping. In fact, the algorithm in \cite{acl} differs and it requires strong monotonicity assumptions for convergence. Our approach provides a new method for solving the equation \( 0 = f(x) \) without requiring the derivative of \( f \).
\end{remark}

\section{Applications}\label{sec5}
\subsection{Quadratic Programming Subject to Linear Constraints}
First let us consider the  problem of minimizing a nonconvex quadratic function subject to a linear constraint as follows
\beq
\min_{y\in \R^{n_1}} \frac{1}{2}y^\top Qy+c^\top y
\eeq
subject to
$$
Cy=d,
$$
where $Q=Q^\top\in \R^{n_1\times n_1}, C\in \R^{n_2\times n_1}$  are matrices, $c$ and  $y$ are $n_1 \times 1$ vectors and $d$ is an $n_2 \times 1$ vector.

We can apply the method of Lagrange multipliers. Let
$$
L(y,\lambda )= \frac{1}{2}y^\top Qy+c^\top y+\lambda^\top(Cy-d),
$$
be the Lagrangian where the Lagrange multiplier $\lambda$ is an $n_2 \times 1$ vector. By solving 
$$\left\{
\begin{array}{lll}
\frac{\partial L}{\partial y}=Qy+c+C^\top\lambda=0\\
&&\\
\frac{\partial L}{\partial \lambda}=Cy-d=0,\\
\end{array}\right. 
$$
we have 
$$
\left[ \begin{array}{ccc}
Q_{n_1\times n_1}\;\;\; \;C^\top_{n_1\times n_2}\\ \\
C_{n_2\times n_1}\;\;\;\; 0_{n_2\times n_2}
\end{array} \right] \left[ \begin{array}{ccc}
y_{n_1\times 1}\\ \\
\lambda_{n_2\times 1}
\end{array} \right]=\left[ \begin{array}{ccc}
-c_{n_1\times 1}\\ \\
d_{n_2\times 1}
\end{array} \right]
$$
which is equivalent to solving 
$$ Ax = b, $$
 where $$ A= \left[ \begin{array}{ccc}
Q_{n_1\times n_1}\;\;\; \;C^\top_{n_1\times n_2}\\ \\
C_{n_2\times n_1}\;\;\;\; 0_{n_2\times n_2}
\end{array} \right] =A^\top\in \mathbb{R}^{n \times n},\;\; n=n_1+n_2 $$
$$
x=\left[ \begin{array}{ccc}
y_{n_1\times 1}\\ \\
\lambda_{n_2\times 1}
\end{array} \right]\in \mathbb{R}^{n \times 1}, b=\left[ \begin{array}{ccc}
-c_{n_1\times 1}\\ \\
d_{n_2\times 1}
\end{array} \right]
\in \mathbb{R}^{n \times 1}. 
$$
We are interested in the case where $A$ is a singular matrix. If \( A \) is monotone, methods such as the Nesterov accelerated algorithm \cite{Nesterov}, the proximal point algorithm \cite{Rockafellar}, or the Tikhonov regularization \cite{Le,Tikhonov} can be used. If \( A \) is not monotone, the algorithm  (\textbf{DCA}) based on the forward-backward technique  can  be applied by decomposing \( A = B - C \), where \( B \) and \( C \) are positive definite matrices. This decomposition, though, becomes challenging if the Lipschitz constant of \( A \) is large.  Besides using the inverse approach, other methods such as Gauss/Gauss-Jordan elimination, LU factorization, Eigenvalue Decomposition and Singular Value Decomposition can be also applied}. In this study, we propose a straightforward alternative approach using the concept of  monotonicity of pairs and the forward-backward technique.
\vskip 2mm
  Define \( F(x) = Ax - b \) and let \( |\alpha| \) be the smallest absolute value of the non-zero eigenvalues of \( A \). Assume that \( |\alpha| \) is not too small and take \( 0 < \kappa < |\alpha|/2 \). Then \( \kappa \) is the smallest absolute value of the eigenvalues of the invertible matrix \( A + \kappa \, \Id \). Thus $\kappa^2$ is the smallest eigenvalue of the positive definite matrix \( (A + \kappa \, \Id)^2 \). Let \( v = A + 2\kappa \, \Id \) then $v$ is invertible and we have the following. 
   \begin{lemma} \label{mpair}
 If $A$ is symmetric,  then the pair \( (F,v) \) is monotone. Equivalently, the matrices $A(A + 2\kappa \, \Id)$ and $(A + 2\kappa \, \Id)A$ are monotone.
    \end{lemma}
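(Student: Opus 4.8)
The plan is to reduce the monotonicity of the pair $(F,v)$ to a single quadratic-form inequality and then verify it through the spectrum of $A$. Since $F(x) = Ax - b$ and $v(x) = (A + 2\kappa\,\Id)x$ are affine, for any $x, y \in \mathbb{R}^n$ one has $F(x) - F(y) = A(x-y)$ and $v(x) - v(y) = (A + 2\kappa\,\Id)(x-y)$. Writing $z = x - y$, which ranges over all of $\mathbb{R}^n$, the defining inequality $\langle F(x) - F(y), v(x) - v(y)\rangle \ge 0$ becomes
\[
\langle Az, (A + 2\kappa\,\Id)z\rangle \ge 0 \quad \text{for all } z \in \mathbb{R}^n.
\]
Because $A = A^\top$, the left-hand side equals $\langle z, A(A + 2\kappa\,\Id)z\rangle = z^\top (A^2 + 2\kappa A) z$, so the statement reduces to showing that the symmetric matrix $A^2 + 2\kappa A$ is positive semidefinite.

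For this I would complete the square, writing $A^2 + 2\kappa A = (A + \kappa\,\Id)^2 - \kappa^2\,\Id$, and invoke the spectral observation made just before the lemma. Each eigenvalue $\lambda$ of $A$ satisfies $(\lambda + \kappa)^2 \ge \kappa^2$, i.e.\ $\lambda(\lambda + 2\kappa) \ge 0$: this is immediate for $\lambda = 0$, while for $\lambda \neq 0$ the hypothesis $0 < \kappa < |\alpha|/2$ forces $|\lambda| \ge |\alpha| > 2\kappa$, so both $\lambda$ and $\lambda + 2\kappa$ carry the same sign. Hence $(A + \kappa\,\Id)^2 \succeq \kappa^2\,\Id$, and therefore
\[
A^2 + 2\kappa A = (A + \kappa\,\Id)^2 - \kappa^2\,\Id \succeq 0.
\]

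Finally, I would record the equivalence with the matrix formulation. Since $A$ is symmetric it commutes with $A + 2\kappa\,\Id$, so $A(A + 2\kappa\,\Id) = (A + 2\kappa\,\Id)A = A^2 + 2\kappa A$, and this common matrix is itself symmetric; for a symmetric matrix, monotonicity (nonnegativity of its quadratic form) coincides with positive semidefiniteness, which is exactly what was established. I do not expect a genuine obstacle here: the whole argument is a square-completion plus an eigenvalue check. The only point demanding care is the role of the hypothesis $\kappa < |\alpha|/2$, which is precisely what ensures the error term $\kappa^2\,\Id$ is absorbed rather than producing a negative eigenvalue; without the spectral gap $|\alpha|$ separating $0$ from the nonzero eigenvalues of $A$, the conclusion would fail.
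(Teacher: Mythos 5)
Your proof is correct and takes essentially the same route as the paper's: both reduce matters to the quadratic form $\langle Az,(A+2\kappa\,\Id)z\rangle$, complete the square as $\|(A+\kappa\,\Id)z\|^2-\kappa^2\|z\|^2$, and invoke the spectral fact $(A+\kappa\,\Id)^2\succeq\kappa^2\,\Id$ guaranteed by $0<\kappa<|\alpha|/2$. The only difference is presentational: you make explicit the eigenvalue check $\lambda(\lambda+2\kappa)\ge 0$ and the commutativity argument for the two matrix formulations, which the paper leaves to the remarks preceding the lemma.
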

       \begin{proof} 
       We have
\baqn
\langle F(x_1) - F(x_2), v(x_1) - v(x_2) \rangle &=& \langle A(x_1-x_2),(A + 2\kappa \, \Id) (x_1-x_2) \\
&=&\| (A + \kappa \, \Id)(x_1 - x_2) \|^2 - \kappa^2 \| x_1 - x_2 \|^2 \\
&\ge& 0.
\eaqn
    \end{proof}
Since $v^{-1}$ is Lipschitz continuous on $\R^n$, we can apply the algorithm (\textbf{GPPA}) as follows
\begin{equation}\label{appg}
x_0 \in H, \; x_{k+1} = (2A + 2\kappa \, \Id)^{-1} (Ax_k + 2\kappa x_k + b), \quad k = 1, 2, 3, \dots
\end{equation}
This algorithm converges rapidly, even for large values of \( n \) (e.g., \( n = 400, 600, 800, 1000 \)), with random choices of \( A \), \( b \), and initial points. Assuming that the solution set is non-empty (i.e., \( b \) is within the range of \( A \)), we select \( \kappa = 0.2 \), calculate the error \( e_k = \| Ax_k - b \| \) and obtain the following data:
\begin{center}
    \begin{tabular} { | c | c | c | c |  c | }
    \hline
     No. Test &n& Number of iterations $k$ & times &  $e_k$\\
    \hline
    1 & 400 &23 & 0.14s &1.3856 x$10^{-4}$\\
    \hline
    2 & 600 &24 & 0.33s&1.0688x$10^{-4}$\\
    \hline
    3 & 800 & 24 & 0.65s& 1.4576x$10^{-4}$\\
    \hline
    4 & 1000 & 30 & 1.49s& 1.2668x$10^{-4}$\\
    \hline
    \end{tabular}
    \end{center}
    
 Here we  {used}  Matlab R2016a installed in the notebook Dell Vostro 15 3000, Core i5 - 1135G7, Ram 8 Gb. Let us compare to (\textbf{DCA}), a well-known algorithms in nonconvex optimization which also uses the inverse operators. However, the performance of (\textbf{DCA}) for this case is poor, even for small dimensions. For example, when \( n = 3 \), decomposing \( A = B - C \) with \( B = A + m\Id \) and \( C = m\Id \) for \( m = 50, 100, 500, 1000 \), the algorithm converges very slowly or fails to converge in many tests. The errors $e_k$ can be very big. This behavior is expected since, in general, (\textbf{DCA}) guarantees only the convergence of \( \|x_{k+1} - x_k\| \) to zero, but not the convergence of \( (x_k) \).
 \begin{remark}
In general Lemma \ref{mpair} is not true if $A$ is not symmetric, counterexamples can be found easily, {for example let
$$A= \left[ \begin{array}{ccc}
0\;\;\;\; \; \; \;\;0\;\;\;\; \;  \; \; \; \;0\\ \\
0\;\;\;\; \; \;\; \;1\;\;\;\; \;  \; \; \; \;2\\ \\
0\;\;\;\; -2\;\;\;\; -3
\end{array} \right], x= \left[ \begin{array}{ccc}
0\\ \\
-3\\ \\
2
\end{array} \right], \kappa=1/4.
$$
Then $A$ is not symmetric and the eigenvalue values are $\lambda_1=0, \lambda_2=\lambda_3=-1.$ We have 
$$
 \langle Ax, (A + 2\kappa \Id)x \rangle  = \langle \left[ \begin{array}{ccc}
 0\\ \\
1\\ \\
0
\end{array} \right],\left[ \begin{array}{ccc}
 0\\ \\
-1/2\\ \\
1
\end{array} \right] \rangle = -1/2 < 0.
$$}
\end{remark}
\subsection{Minimizing \( \|Ax - b\|^2 \) Without Using \( A^\top A \)}

Next, we demonstrate that Algorithm (\ref{appg}) can also minimize the function \( f(x) = \|Ax - b\|^2 \), where \( b \in \mathbb{R}^{n \times 1} \) is not necessarily in the range of  the symmetric matrix  \( A \in \mathbb{R}^{n \times n} \), without computing \( A^\top A=A^2 \) as is typically required. This problem is equivalent to finding the projection of \( b \) onto the range of \( A \). Note that \( f \) is convex with gradient \( \nabla f(x) = A^\top Ax - A^\top b =A^2x-Ab\), and the Lipschitz constant of \( \nabla f \) is \( \|A^2\| \). We establish the following results.

\begin{lemma}
The point \( x^* \) is a minimum of \( f \) if and only if \( A^2x^* = Ab \).
\end{lemma}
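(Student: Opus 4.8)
The plan is to reduce the statement to the standard first-order optimality characterization for convex functions. The excerpt already records that $f$ is convex with gradient $\nabla f(x) = A^\top A x - A^\top b$, and since $A$ is symmetric we have $A^\top = A$, hence $A^\top A = A^2$ and $A^\top b = Ab$, giving $\nabla f(x) = A^2 x - Ab$. For a convex, differentiable function on $\R^n$, a point is a global minimizer if and only if its gradient vanishes there. Thus the whole lemma collapses to rewriting the stationarity condition $\nabla f(x^*) = 0$.

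Concretely, the first step is to confirm convexity in a way that makes stationarity both necessary and sufficient. I would observe that the Hessian of $f$ is $\nabla^2 f = A^\top A = A^2$, which is positive semidefinite because $A$ is symmetric: for every $z \in \R^n$ one has $\langle A^2 z, z \rangle = \langle Az, Az \rangle = \|Az\|^2 \geq 0$. This justifies upgrading the necessary optimality condition to an equivalence, yielding both directions at once.

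The second step is a direct computation: $x^*$ minimizes $f$ if and only if $\nabla f(x^*) = 0$, i.e. $A^2 x^* - Ab = 0$, which is exactly $A^2 x^* = Ab$. Combining the two steps completes the proof.

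I expect no genuine obstacle here; the argument is routine. The only point deserving explicit care is that the ``if and only if'', as opposed to merely the necessary condition, rests on the convexity of $f$, which in turn uses the symmetry of $A$ to guarantee $A^2 \succeq 0$. Everything else is a substitution exploiting $A^\top = A$.
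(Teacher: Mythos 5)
Your proof is correct and follows essentially the same route as the paper, which simply records that \( f \) is convex with \( \nabla f(x) = A^\top Ax - A^\top b = A^2x - Ab \) and then solves \( \nabla f = 0 \); you merely make explicit the convexity justification the paper leaves implicit. One small correction to your attribution: convexity of \( f(x) = \|Ax-b\|^2 \) does not depend on the symmetry of \( A \), since \( \langle A^\top A z, z \rangle = \|Az\|^2 \geq 0 \) for any matrix \( A \); symmetry is needed only to rewrite \( A^\top A = A^2 \) and \( A^\top b = Ab \) in the statement of the lemma.
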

\begin{proof}
By solving \( \nabla f = 0 \), the conclusion follows.
\end{proof}

\begin{theorem}
If \( (x_k) \) is the sequence generated by Algorithm (\ref{appg}), then \( \|A^2x_k - Ab\| \to 0 \).
\end{theorem}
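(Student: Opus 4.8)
The plan is to exploit the fact that the iteration \eqref{appg} is affine-linear. Writing $M := 2A + 2\kappa\,\Id = 2(A+\kappa\,\Id)$ and $N := v = A + 2\kappa\,\Id$, the recursion reads $x_{k+1} = M^{-1}(Nx_k + b)$, where $M$ is invertible because $\kappa>0$ forces every eigenvalue $\lambda+\kappa$ of $A+\kappa\,\Id$ to be nonzero. I emphasize that Theorem~\ref{t:warped} and Corollary~\ref{weakc} are \emph{not} available here: since $b$ need not lie in $\ran A$, the system $Ax=b$ (that is, $\zer F$) may be empty, so the iterates $(x_k)$ themselves may fail to converge, drifting off along $\ker A$. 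This is the main obstacle, and the key idea is to track not $x_k$ but the sequence $g_k := A^2x_k - Ab$ (which, up to a positive factor, is $\nabla f(x_k)$); this quantity is insensitive to the kernel drift and is exactly what must be driven to zero.

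First I would show that $(g_k)$ obeys a \emph{homogeneous} linear recursion. Since $A$ is symmetric, the operators $A$, $M$, $N$, and $T := M^{-1}N$ are self-adjoint and mutually commuting, each being a rational function of $A$. Multiplying the recursion by $A^2$ and commuting $A^2$ through gives $A^2 x_{k+1} = T(A^2 x_k) + M^{-1}A^2 b$, so that
\begin{align*}
g_{k+1} = T g_k + \bigl(T(Ab) + M^{-1}A^2 b - Ab\bigr).
\end{align*}
The parenthesised term collapses by a one-line identity: $T(Ab) + M^{-1}A^2 b = M^{-1}(N+A)Ab = M^{-1}M(Ab) = Ab$, using $N+A = M$. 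Hence the affine part cancels exactly and
\begin{align*}
g_{k+1} = T g_k.
\end{align*}

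Next I would bound $T$ on the subspace containing $(g_k)$. Since $g_k = A(Ax_k - b) \in \ran A$ and, $A$ being symmetric, $\ran A = (\ker A)^\perp$ is $T$-invariant, it suffices to control $T$ on $\ran A$. Diagonalising $A$, the eigenvalue of $T$ attached to a nonzero eigenvalue $\lambda$ of $A$ is $t(\lambda) = \frac{\lambda+2\kappa}{2\lambda+2\kappa}$, and
\begin{align*}
(2\lambda+2\kappa)^2 - (\lambda+2\kappa)^2 = \lambda(3\lambda+4\kappa) > 0
\end{align*}
whenever $|\lambda| > 2\kappa$. The choice $0<\kappa<|\alpha|/2$ guarantees $|\lambda| \ge |\alpha| > 2\kappa$ for every nonzero eigenvalue, so $|t(\lambda)| < 1$ for all of them; thus $\rho := \max_{\lambda\neq 0}|t(\lambda)|$, the operator norm of $T$ restricted to $\ran A$, satisfies $\rho<1$.

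Finally, combining the two facts gives $\|g_{k+1}\| = \|T g_k\| \le \rho\,\|g_k\|$, whence $\|g_k\| \le \rho^k\|g_0\| \to 0$; that is, $\|A^2 x_k - Ab\| \to 0$, and in fact with linear rate $\rho$. The only delicate points are the exact cancellation of the affine term (immediate once one spots $N+A=M$) and the spectral estimate $|t(\lambda)|<1$, which is precisely where the hypothesis $\kappa<|\alpha|/2$ enters.
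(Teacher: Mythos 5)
Your proof is correct, and it takes a genuinely different route from the paper. The paper stays inside its monotonicity-of-pairs framework: from the recursion it extracts the identity $Ax_{k+1} + (A+2\kappa\,\Id)(x_{k+1}-x_k) = b$, uses Lemma~\ref{mpair} (monotonicity of $A(A+2\kappa\,\Id)$) to show that $(\|Ax_k - b\|)$ is nonincreasing, deduces $\|A(x_{k+1}-x_k)\|\to 0$ and hence $A(A+2\kappa\,\Id)(x_{k+1}-x_k)\to 0$, and then reads off $A^2x_{k+1}-Ab = -A(A+2\kappa\,\Id)(x_{k+1}-x_k)\to 0$ from the same identity. You instead exploit the affine-linear structure head-on: the exact cancellation $N+A=M$ turns $g_k = A^2x_k - Ab$ into a homogeneous iteration $g_{k+1}=Tg_k$, and the spectral estimate $|t(\lambda)| = \bigl|\tfrac{\lambda+2\kappa}{2\lambda+2\kappa}\bigr|<1$ on $\ran A$ (which is where $\kappa<|\alpha|/2$ enters, exactly as in your computation $\lambda(3\lambda+4\kappa)>0$) gives geometric decay. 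What your approach buys is a strictly stronger conclusion — linear convergence of $\|A^2x_k-Ab\|$ at rate $\rho=\max_{\lambda\neq 0}|t(\lambda)|<1$, whereas the paper only obtains convergence to zero with no rate — and it is self-contained, not even needing Lemma~\ref{mpair}. What it costs is generality: diagonalization and commutativity are available only because $A$ is symmetric and the iteration is affine, so the argument does not extend to the set-valued or nonlinear inclusions the paper's machinery is built for, while the paper's Fejér-type descent argument is the template that does. Both identify the same obstruction (possible drift along $\ker A$ when $b\notin\ran A$, which rules out invoking Theorem~\ref{t:warped} or Corollary~\ref{weakc} directly) and both circumvent it by tracking the gradient-like residual rather than the iterates themselves.
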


\begin{proof}
From (\ref{appg}), we have:
\baq\label{mini}
2Ax_{k+1} + 2\kappa x_{k+1} = Ax_k + 2\kappa x_k + b \iff Ax_{k+1} + (A + 2\kappa\Id)(x_{k+1} - x_k) = b.
\eaq
Thus:
\[
-\langle Ax_{k+1} - b, A(x_{k+1} - x_k) \rangle = \langle (A + 2\kappa \Id)(x_{k+1} - x_k), A(x_{k+1} - x_k) \rangle \geq 0,
\]
since \( A(A + 2\kappa\Id) \) is monotone (Lemma \ref{mpair}). Therefore:
\[
\|Ax_{k+1} - b\|^2 \leq \langle Ax_{k+1} - b, Ax_k - b \rangle = \frac{1}{2}(\|Ax_{k+1} - b\|^2 + \|Ax_k - b\|^2) - \|A(x_{k+1} - x_k)\|^2.
\]
From this inequality, we deduce that the sequence \( (\|Ax_{k+1} - b\|)_{n \in \mathbb{N}} \) is decreasing and \( \|A(x_{k+1} - x_k)\| \to 0 \). Thus:
\[
 A(A + 2\kappa \Id)(x_{k+1} - x_k) \to 0.
\]
From (\ref{mini}), we deduce that \( \|A^2x_k - Ab\| \to 0 \), and the conclusion follows.
\end{proof}

\section{Conclusion} \label{sec6}

In this paper, we have demonstrated that the monotonicity of pairs of operators can be effectively used to solve non-convex optimization problems. Specifically, we proposed several Generalized Proximal Point Algorithms (\textbf{GPPAs}) to address non-monotone inclusions using warped resolvents and transformed resolvents.  {These algorithms are simple and examples show that like  (\textbf{DCA}), they are suitable for large-scale optimization while having better convergence properties, thus supporting the potential of our approach.}
However, designing an explicit procedure to select the associated function \( v \) to ensure the monotonicity of pairs, as in the quadratic application, remains an open and intriguing question for future research.


\end{document}